\newtheorem{theorem}{Theorem}[section]
\newtheorem{proposition}[theorem]{Proposition}
\newtheorem{lemma}[theorem]{Lemma}
\newtheorem{corollary}[theorem]{Corollary}
\theoremstyle{definition}
\newtheorem{definition}[theorem]{Definition}
\newtheorem{example}[theorem]{Example}
\numberwithin{equation}{section}
\title{Two Laplacians for the resistance distance matrix of a graph}
\author{Shivani Tushar Parab}
\address{St Joseph's University,36, Langford Rd, Langford Gardens, Bengaluru, Karnataka 560027}
\email{shivaniparab004@gmail.com}
\author{Raisa DSouza}
\address{St Joseph's University,36, Langford Rd, Langford Gardens, Bengaluru, Karnataka 560027}
\email{raisadsouza@sju.edu.in}
\date{}
\keywords{Resistance Distance, Resistance Laplacian matrix, Resistance Signless Laplacian matrix, Resistance Laplacian energy}
\subjclass[2020]{Primary 05C50}
\begin{document}

\maketitle
\begin{abstract}
In this paper, we present two new matrices, namely the resistance Laplacian and resistance signless Laplacian matrix of a connected graph. We provide a generalized form of these matrices for different classes of graphs, including the complete graph, complete bipartite graph, and cycle. We investigate the spectral properties of these matrices, analyzing their eigenvalues and eigenvectors. Moreover, we introduce a concept similar to graph energy and define it as the resistance Laplacian energy of a graph and further discuss some bounds for this energy.
\end{abstract}
\section{Introduction}
 Let $G=(V,E)$ be a connected graph of order $n$ with vertices $V=\left\{ 1,2,\cdots ,n \right\}$. The Laplacian and signless Laplacian matrix of $G$ are defined as $L(G)=\mathfrak{D}(G)-A(G)$ and $Q(G)=\mathfrak{D}(G)+A(G)$, respectively. Here $\mathfrak{D}(G)$ is the diagonal matrix of vertex degrees and $A(G)$ is the adjacency matrix of $G$. The sum of distances from vertex $i$ to all the vertices in $G$ is called the transmission of $i$ and is denoted by $Tr(i)$. 
In \cite{aouchiche2013} Aouchiche and Hansen analogously introduced the distance Laplacian and distance signless Laplacian of a graph $G$ as $\mathcal{D}^L= Diag(Tr)-\mathcal{D}(G)$ and $\mathcal{D}^Q= Diag(Tr)+\mathcal{D}(G)$ respectively, where $Diag(Tr)$ denotes the diagonal matrix of vertex transmissions in $G$ and $\mathcal{D}(G)$ denotes the distance matrix of $G$. In their work,  Aouchiche and Hansen derived formulae for the characteristic polynomial of the Distance Laplacian and Distance Signless Laplacian matrices for specific classes of graphs. Furthermore, they established the equivalence between the distance signless Laplacian matrix, distance Laplacian matrix, and the distance spectra for the class of transmission regular graphs (a graph whose every vertex has the same transmission.)

 In the year 2020, Shareifudding Pirzada et al. obtained the distance signless Laplacian spectrum of the joined union of regular graphs \cite{pirzada2020}. As a consequence, they determined the distance signless Laplacian spectrum of the zero divisor graphs of finite commutative rings $\mathbb{Z}_n$ for some values of $n$.  The concept of the distance Laplacian matrix also led to the introduction of the reciprocal distance Laplacian matrix by Ravindra Bapat and Swarup Kumar Panda in \cite{skp2018}. They showed that the largest eigenvalue of this matrix is at most $n$, with equality holding if and only if the complement graph $\Bar{G}$ of $G$ is disconnected.

Let $G$ be a graph with $n$ vertices and $m$ edges. Then the adjacency matrix eigenvalues $\{ \lambda_1,\lambda_2,\cdots,\lambda_n \}$ obey the following  relations:
     \begin{equation}
     \label{energy}
        \sum_{i=1}^n \lambda_i=0 ; \quad \sum_{i=1}^n \lambda_i^2=2 m, 
     \end{equation}
     Jieshan Yang, Lihua You and Ivan Gutman in \cite{yang2013} defined distance Laplacian energy of a connected graph $G$ with the intention to conceive a graph energy like quantity which satisfies the relations (\ref{energy}).
        \begin{definition}
            Let $G$ be a connected graph on $n$ vertices. Let $\left\{\delta_1^L, \delta_2^L, \ldots, \delta_n^L\right\}$ be the eigenvalues of $\mathcal{D}^L(G)$. Then the distance Laplacian energy of $G$ is defined as 
            \[LE_{\mathcal{D}}(G)=\sum_{i=1}^n\left|\xi_i\right| \]
            where $\xi_i= \delta_i^L-\frac{1}{n} \displaystyle\sum_{j=1}^n Tr(j)$.
        \end{definition}
        
        \begin{theorem} {\cite[Lemma 2.1]{yang2013}}
            Let $G$ be a connected graph of order $n$. Then $\displaystyle\sum_{i=1}^n \xi_i=0$ and $\displaystyle\sum_{i=1}^n \xi_i^2=2 S$ where,$$s=\displaystyle\sum_{1 \leq i<j \leq n}\left(d(i,j)\right)^2 \quad \text{and} \quad  S=s+\frac{1}{2} \sum_{i=1}^n\left(Tr(i)-\frac{1}{n} \sum_{j=1}^n Tr(j)\right)^2.$$
        \end{theorem}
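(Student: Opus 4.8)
The plan is to reduce both identities to computations of traces, using that the distance matrix $\mathcal{D}(G)$ is symmetric with zero diagonal. Throughout, write $\overline{t}=\frac{1}{n}\sum_{j=1}^n Tr(j)$ for the mean transmission, so that $\xi_i=\delta_i^L-\overline{t}$.

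For the first identity, I would observe that $\sum_{i=1}^n\delta_i^L=\operatorname{tr}(\mathcal{D}^L(G))=\operatorname{tr}(Diag(Tr))-\operatorname{tr}(\mathcal{D}(G))=\sum_{i=1}^n Tr(i)-0=n\overline{t}$, where $\operatorname{tr}(\mathcal{D}(G))=0$ because the distance matrix has zero diagonal. Subtracting $n\overline{t}=\sum_{i=1}^n\overline{t}$ gives $\sum_{i=1}^n\xi_i=0$.

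For the second identity, first expand
\[
\sum_{i=1}^n\xi_i^2=\sum_{i=1}^n(\delta_i^L)^2-2\overline{t}\sum_{i=1}^n\delta_i^L+n\overline{t}^{\,2}=\sum_{i=1}^n(\delta_i^L)^2-n\overline{t}^{\,2},
\]
using the first identity. Next compute $\sum_{i=1}^n(\delta_i^L)^2=\operatorname{tr}(\mathcal{D}^L(G)^2)$ by expanding
\[
\mathcal{D}^L(G)^2=Diag(Tr)^2-Diag(Tr)\mathcal{D}(G)-\mathcal{D}(G)Diag(Tr)+\mathcal{D}(G)^2 .
\]
The two cross terms have trace zero because $\mathcal{D}(G)$ has zero diagonal; $\operatorname{tr}(Diag(Tr)^2)=\sum_{i=1}^n Tr(i)^2$; and $\operatorname{tr}(\mathcal{D}(G)^2)=\sum_{i,j}d(i,j)^2=2s$, since the sum over ordered pairs counts each unordered pair twice. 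Hence $\sum_{i=1}^n(\delta_i^L)^2=\sum_{i=1}^n Tr(i)^2+2s$, so $\sum_{i=1}^n\xi_i^2=2s+\sum_{i=1}^n Tr(i)^2-n\overline{t}^{\,2}$. Finally, the elementary identity $\sum_{i=1}^n(Tr(i)-\overline{t})^2=\sum_{i=1}^n Tr(i)^2-n\overline{t}^{\,2}$ gives $2S=2s+\sum_{i=1}^n Tr(i)^2-n\overline{t}^{\,2}$, which matches the expression just obtained for $\sum_{i=1}^n\xi_i^2$.

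There is no serious obstacle here: the argument is a bookkeeping exercise in traces of matrix expansions. The only points requiring care are the vanishing of the cross terms (a direct consequence of the zero diagonal of $\mathcal{D}(G)$) and the factor $2$ in $\operatorname{tr}(\mathcal{D}(G)^2)=2s$, which must be reconciled with the factor $\tfrac12$ in the definition of $S$; matching these is precisely what makes the two sides coincide.
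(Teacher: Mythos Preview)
Your proof is correct and follows essentially the same trace-computation approach that the paper uses for its analogous result on the resistance Laplacian (Theorem~\ref{sum}); the paper does not reprove the distance-Laplacian version itself but only cites it. Your treatment is in fact slightly more explicit than the paper's in justifying why the cross terms $\operatorname{tr}(Diag(Tr)\,\mathcal{D}(G))$ vanish.
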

Kinkar Chandra Das, Mustapha Aouchiche, and Pierre Hansen provided bounds on Laplacian energy and signless Laplacian energy, characterized graphs for which these bounds are best possible, and established a relation between these energies in \cite{kcdas2018}. 

In addition to the classical notion of distance based on shortest paths, the concept of resistance distance was introduced by Klein and Randić in \cite{klein1993}. Instead of focusing solely on path lengths, resistance distance considers the electrical resistance between vertices, treating the graph as an electrical network. Resistance distance between vertices $i$ and $j$ of a graph $G$ is defined as the effective resistance between the two vertices when each graph edge is replaced by a unit resistor. There are several equivalent ways to define the resistance distance. We describe one such definition which will be used.
\begin{definition}
Let $G$ be a connected graph with $n$ vertices. Let $L$ be the Laplacian matrix and $L^\dagger $ be the Moore Penrose Inverse of $L$ then for any two vertices $i\neq j$ the resistance distance $r(i,j)$ between $i$ and $j$ is defined as
        $r(i,j) = L_{ii}^\dagger +L_{jj}^\dagger -2L_{ij}^\dagger$.
        \newline
       The resistance distance matrix $R(G)$ is an $n\times n$ matrix whose $ij^{th}$ entry is $r(i,j)$ when $i\neq j$ and $0$ otherwise.
    \end{definition}   
The resistance distance forms a metric on graphs. R. B. Bapat and Somit Gupta obtained the resistance distance between any two vertices of a wheel and a fan in \cite{bapat2010}. In \cite{chen2021}, Wuxian Chen and Weigen Yan determined the formula for computing the resistance distance between any two vertices in the vertex-weighted complete multipartite network graph.

The following two results give the relation between resistance distance and the classical distance in a graph.
\begin{theorem}{\cite[Theorem 9.4]{bapat2010graphs}}
\label{lemm1}
Let $G$ be a connected graph and $i,j \in V(G)$. Then $r(i,j) \leq d(i,j)$.
\end{theorem}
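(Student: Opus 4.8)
The plan is to deduce the inequality from two ingredients, both essentially already available: the fact that $r(\cdot,\cdot)$ is a metric on $V(G)$, so that the triangle inequality holds, and the observation that adjacent vertices are at resistance distance at most $1$. Granting these, the argument is short. If $d(i,j)=k$, choose a shortest path $i=v_0,v_1,\dots,v_k=j$ in $G$. Each consecutive pair $v_{\ell-1}v_\ell$ is an edge, so $r(v_{\ell-1},v_\ell)\le 1$, and repeated application of the triangle inequality gives
\[
  r(i,j) \leq \sum_{\ell=1}^{k} r(v_{\ell-1},v_\ell) \leq k = d(i,j).
\]

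Since the metric property of $r$ is already recorded above, the only real content is the edge estimate: if $ij\in E(G)$ then $r(i,j)\le 1$. I would establish this via the electrical-network reading of resistance distance together with Thomson's least-energy principle: with every edge a unit resistor, the effective resistance between $i$ and $j$ equals the minimum of $\sum_{e\in E}\theta(e)^2$ taken over all unit flows $\theta$ from $i$ to $j$. When $ij$ is an edge, routing the entire unit of current straight along $ij$ is an admissible unit flow of energy $1$, so the minimum, and hence $r(i,j)$, is at most $1$. Equivalently one can invoke Rayleigh's monotonicity principle: deleting every edge except $ij$ leaves a single unit resistor, of effective resistance $1$, and since deleting edges never decreases effective resistance, $r(i,j)\le 1$ in $G$ as well. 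A purely algebraic alternative is to feed a suitable test vector into the quadratic-form description of $L^{\dagger}$, but the network argument is cleaner.

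The step requiring the most care is precisely this edge estimate, because so far $r(i,j)$ has been introduced only algebraically through the Moore--Penrose inverse $L^{\dagger}$; to run the flow/energy argument one must first invoke the standard equivalence between that algebraic definition and the physical effective resistance, which the paper already alludes to. Once $r(i,j)\le 1$ for edges is secured, the reduction to a shortest path and the telescoping use of the triangle inequality are entirely routine.
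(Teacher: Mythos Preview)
The paper does not actually supply a proof of this statement; it is quoted verbatim as \cite[Theorem 9.4]{bapat2010graphs} and used without argument. Your proposal is correct and is essentially the standard proof found in Bapat's text: invoke the metric property of $r(\cdot,\cdot)$ along a shortest $i$--$j$ path and bound each edge term by $1$ via Rayleigh monotonicity (or equivalently Thomson's principle). There is nothing to compare against here, and your write-up would serve perfectly well as the missing justification.
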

\begin{theorem}{\cite[Theorem 9.6]{bapat2010graphs}}
\label{resdist}
    Let $G$ be a directed connected graph with $V(G)=\{1, \ldots, n\} $. Suppose there is a unique path from $i$ to $j$ in $G$ then $r(i,j)=d(i,j)$. In particular in a tree resistance and classical distance coincide.
\end{theorem}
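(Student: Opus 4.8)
The plan is to turn $r(i,j)$ into a difference of electric potentials and then exploit the fact that every edge on the unique $i$--$j$ path is a separating edge. First I would unwind the definition. Since $G$ is connected, $\operatorname{rank} L = n-1$ with $\ker L = \langle \mathbf 1\rangle$ and $\operatorname{range} L = \mathbf 1^{\perp}$; as $e_i - e_j \perp \mathbf 1$, the vector $\phi := L^{\dagger}(e_i - e_j)$ satisfies $L\phi = e_i - e_j$, and
\[ r(i,j) = L^{\dagger}_{ii} + L^{\dagger}_{jj} - 2L^{\dagger}_{ij} = (e_i - e_j)^{\top} L^{\dagger}(e_i - e_j) = (e_i - e_j)^{\top}\phi = \phi_i - \phi_j . \]
Read row by row, $L\phi = e_i - e_j$ says that $\sum_{v \sim w}(\phi_w - \phi_v)$ equals $1$ for $w = i$, equals $-1$ for $w = j$, and equals $0$ otherwise; that is, $\phi$ is the node-potential vector of the unit current flow from $i$ to $j$, the current along an edge $uv$ being $\phi_u - \phi_v$.

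Next I would set up the path. Let $P:\; i = v_0, v_1, \dots, v_k = j$ be the unique path from $i$ to $j$; by uniqueness it is also a shortest path, so $k = d(i,j)$. Fix $\ell \in \{1,\dots,k\}$ and put $e_\ell = v_{\ell-1}v_\ell$. If $i$ and $j$ were still connected in $G - e_\ell$, a simple path between them in $G - e_\ell$ would avoid $e_\ell$ and hence differ from $P$, contradicting uniqueness. So $G - e_\ell$ splits into components $A \ni i$ (with $v_{\ell-1}\in A$) and $B \ni j$ (with $v_\ell \in B$), and $e_\ell$ is the only edge joining $A$ to $B$.

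Now I would sum the identity $L\phi = e_i - e_j$ over the vertices of $A$. The right-hand side contributes $1$, since only $i$ lies in $A$. On the left, $\sum_{w\in A}\sum_{v\sim w}(\phi_w - \phi_v)$: each edge with both ends in $A$ appears twice with opposite signs and cancels, so only the cut edge $e_\ell$ survives, contributing $\phi_{v_{\ell-1}} - \phi_{v_\ell}$. Hence $\phi_{v_{\ell-1}} - \phi_{v_\ell} = 1$ for every $\ell$, and telescoping along $P$ gives $\phi_i - \phi_j = \sum_{\ell=1}^{k}(\phi_{v_{\ell-1}} - \phi_{v_\ell}) = k$. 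Combined with the first paragraph, $r(i,j) = \phi_i - \phi_j = k = d(i,j)$. The tree statement is the special case in which such a unique path exists for every pair of vertices.

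The only steps that are not purely mechanical are the identification $r(i,j) = \phi_i - \phi_j$ with $\phi$ the unit-flow potential (which rests on the basic linear algebra of $L^{\dagger}$ for a connected graph) and the cut-summation bookkeeping that forces the current on each path-edge to be exactly $1$; once these are in hand the conclusion is a one-line telescoping. Note that Theorem \ref{lemm1} already supplies $r(i,j) \le d(i,j)$, so in principle one only needs the reverse inequality, but the cut computation delivers equality outright, so I would not separate the two bounds. For the directed version one replaces $L$ by the appropriate Laplacian and the flow-conservation identity over $A$ goes through verbatim, the combinatorial fact that each path-edge is separating being unaffected by orientation.
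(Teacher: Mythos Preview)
The paper does not actually prove this statement: it is quoted verbatim from \cite{bapat2010graphs} as a known background result, with no argument given. So there is nothing in the paper to compare your proposal against.

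That said, your argument is correct and self-contained. The identification $r(i,j)=(e_i-e_j)^{\top}L^{\dagger}(e_i-e_j)=\phi_i-\phi_j$ with $L\phi=e_i-e_j$ is exactly the standard potential interpretation, and the key combinatorial step---that each edge $v_{\ell-1}v_\ell$ on the unique $i$--$j$ path is a bridge whose removal separates $i$ from $j$---is justified cleanly. Summing the row identity over the $i$-side of the cut is precisely the divergence/flow-conservation computation, and the cancellation of internal edges plus the single surviving cut edge give $\phi_{v_{\ell-1}}-\phi_{v_\ell}=1$; the telescoping then finishes. This is essentially the proof one finds in Bapat's book, phrased in slightly different language (he works with the orientation/incidence viewpoint, but the content is the same cut-sum identity).

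Two minor remarks. First, the word ``directed'' in the statement is almost certainly a typo in the paper for ``simple'' or the like; the definition of $r(i,j)$ given earlier uses the ordinary symmetric Laplacian, and Bapat's Theorem~9.6 is about undirected graphs. Your closing sentence about a directed version is therefore unnecessary (and would require more care than ``goes through verbatim'', since the relevant Laplacian need not be symmetric). Second, your observation that Theorem~\ref{lemm1} already supplies one inequality is apt but, as you say, superfluous here since the cut computation yields exact equality.
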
 
Analogous to the distance Laplacian and the distance signless Laplacian matrix we introduce the resistance distance Laplacian and resistance distance signless Laplacian. In the next section we formally define these  matrices and compute their spectra for complete graphs, complete bipartite graphs and cylces. In Section 4 we introduce the resistance distance Laplacian energy and discuss some bounds for this energy.

\section{Resistance Laplacian and Signless Laplacian matrices}

\begin{definition}
 The resistance transmission $RTr(v)$ of a vertex $v$ is defined to be the sum of the resistance distances from $v$ to all other vertices in $G$, i.e,
\[{RTr}(v)=\sum_{u \in V} r(u, v)\]
\end{definition}
A connected graph $G$ is said to be $k$- resistance transmission regular if ${RTr}(v)=k$ for every vertex $v \in V$.
\begin{definition}
    Let $G$ be a connected graph. The resistance Laplacian of graph $G$ is defined as the matrix $R^L= Diag(RTr)-R(G)$ where $Diag(RTr)$ denotes the diagonal matrix of vertex resistance transmissions in $G$.
\end{definition}
\begin{definition}
    Let $G$ be a connected graph. The resistance signless Laplacian of graph $G$ is defined as the matrix $R^Q= Diag(RTr)+R(G)$ where $Diag(RTr)$ denotes the diagonal matrix of vertex resistance transmissions in $G$.
\end{definition}

Since $R(G)$ is a symmetric matrix, both $R^L(G)$ and $R^Q(G)$ are symmetric. Moreover, $R^L$ has row sum $0$. Therefore, $0$ is an eigenvalue of $R^L(G)$ with eigenvector having all entries one. Thus, $R^L$ is singular.
If $G$ is a tree then from Theorem \ref{resdist} we have that $r(i,j)=d(i,j)$ and hence for a tree, $R^L(G)=\mathcal{D}^L(G) \hspace{0.5cm} \text{and} \hspace{0.5cm} R^Q(G)=\mathcal{D}^Q(G)$.

 We begin by establishing the general form for $R^L(G)$ and $R^Q(G)$ for a complete graph, complete bipartite graph and cycles and find their spectra. We next show that $R^L(G)$ is positive semidefinite. We first provide some preliminaries.

\begin{definition}
Consider an $n \times n$ matrix
\[A=\left[\begin{array}{cccc}
A_{1,1} & A_{1,2} & \cdots & A_{1, m} \\
A_{2,1} & A_{2,2} & \cdots & A_{2, m} \\
\vdots & \vdots & & \vdots \\
A_{m, 1} & A_{m, 2} & \cdots & A_{m, m}
\end{array}\right]\]
whose rows and columns are partitioned according to a partition $P=\left\{P_1, P_2, \ldots, P_m\right\}$ of $X=\{1,2, \ldots, n\} .$
\newline
The quotient matrix $Q$ is the $m \times m$ matrix whose entries are the average row sums of the blocks $A_{i,j}$ of $A$.
\end{definition}
The partition $P$ is said to be equitable if each block $A_{i,j}$ of $A$ has constant row (and column) sum and in this case the matrix $Q$ is called equitable quotient matrix.
\begin{theorem}{\cite[Theorem 2.3]{equitablequotient}}
\label{equitable quotient}
    If the partition $P$ of $X$ of matrix $A$ is equitable then each eigenvalue of the equitable quotient matrix $Q$ is an eigenvalue of $A$.
\end{theorem}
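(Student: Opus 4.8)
The plan is to exhibit an explicit intertwining relation between $A$ and its quotient matrix $Q$ via the \emph{characteristic matrix} of the partition. Let $S$ be the $n \times m$ matrix whose $(i,\ell)$ entry is $1$ if $i \in P_\ell$ and $0$ otherwise; that is, the $\ell$-th column of $S$ is the indicator vector of the cell $P_\ell$. Since the cells $P_1, \dots, P_m$ are nonempty and pairwise disjoint, the columns of $S$ have disjoint supports and are therefore linearly independent, so $S$ has rank $m$ and $Sv \neq 0$ for every nonzero $v \in \mathbb{C}^m$.

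The key step is to verify the identity $AS = SQ$. Fix $i \in P_k$ and $\ell \in \{1, \dots, m\}$. The $(i,\ell)$ entry of $AS$ is $\sum_{j \in P_\ell} A_{ij}$, which is precisely the sum of the entries of row $i$ lying in the block $A_{k,\ell}$; by equitability this sum is constant over $i \in P_k$ and equals $Q_{k\ell}$, the average row sum of $A_{k,\ell}$. On the other hand, the $(i,\ell)$ entry of $SQ$ is $\sum_{t} S_{it} Q_{t\ell} = Q_{k\ell}$, since $S_{it} = 1$ exactly when $t = k$. Hence $AS = SQ$.

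Finally, suppose $\mu$ is an eigenvalue of $Q$ with eigenvector $v \neq 0$, so that $Qv = \mu v$. Then $A(Sv) = (AS)v = (SQ)v = S(\mu v) = \mu(Sv)$, and $Sv \neq 0$ by the first paragraph; therefore $\mu$ is an eigenvalue of $A$ with eigenvector $Sv$. I do not anticipate a genuine obstacle here: the only point requiring care is the bookkeeping in the identity $AS = SQ$ — keeping the row and column partitions of $A$ aligned consistently — together with the observation that the equitability hypothesis (constant block row sums) is exactly what makes that identity hold. If one wanted the stronger conclusion that the characteristic polynomial of $Q$ divides that of $A$, one could additionally note that $\operatorname{col}(S)$ is an $A$-invariant subspace on which $A$ acts, in the basis given by the columns of $S$, as $Q$.
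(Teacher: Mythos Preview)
Your proof is correct and is the standard argument via the characteristic (indicator) matrix $S$ of the partition: the intertwining relation $AS = SQ$ follows precisely from the constant-row-sum condition in each block, and injectivity of $S$ then lifts each eigenpair $(\mu,v)$ of $Q$ to an eigenpair $(\mu,Sv)$ of $A$. The paper itself does not supply a proof of this statement; it is quoted as \cite[Theorem~2.3]{equitablequotient} and used as a black box, so there is nothing to compare against beyond noting that your argument is exactly the one underlying the cited reference.
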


\begin{lemma}{\cite[Lemma 4.4]{bapat2010graphs}}
 \label{aibj}
     The eigenvalues of the $n \times n$ matrix $aI+bJ$ are $a$ with multiplicity $n-1$ and $a+nb$ with multiplicity $1.$
 \end{lemma}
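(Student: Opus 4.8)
The plan is to reduce everything to the spectrum of the all-ones matrix $J$ and then exploit the fact that $aI+bJ$ is a polynomial in $J$, so it shares an eigenbasis with $J$. First I would observe that the all-ones vector $\mathbf{1}=(1,1,\ldots,1)^T$ satisfies $J\mathbf{1}=n\mathbf{1}$, so $n$ is an eigenvalue of $J$ with eigenvector $\mathbf{1}$. Next, for any vector $x$ with $\sum_{i=1}^{n}x_i=0$ we have $Jx=0$; the space of such vectors is the orthogonal complement of $\mathbf{1}$ and has dimension $n-1$, so $0$ is an eigenvalue of $J$ of multiplicity at least $n-1$. Since $J$ is real symmetric it is diagonalizable, and $1+(n-1)=n$ forces these to be all the eigenvalues with exactly these multiplicities; alternatively $\operatorname{rank}(J)=1$ gives $\dim\ker J=n-1$ directly.

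Then I would note that whenever $Jv=\mu v$ we have $(aI+bJ)v=(a+b\mu)v$, so an eigenbasis of $J$ is simultaneously an eigenbasis of $aI+bJ$, with each eigenvalue $\mu$ of $J$ replaced by $a+b\mu$. Applying this to $\mu=n$ (multiplicity $1$) produces the eigenvalue $a+nb$, and to $\mu=0$ (multiplicity $n-1$) produces the eigenvalue $a$. This is exactly the claimed list, so the proof is complete.

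Since the argument is entirely elementary, there is no genuine obstacle here; the only point that deserves an explicit word is that the two eigenspaces exhibited for $J$ together span $\mathbb{R}^n$, which I would justify either by the spectral theorem for the symmetric matrix $J$ or by the rank--nullity count $\dim\ker J=n-\operatorname{rank}J=n-1$. (One could instead compute the characteristic polynomial $\det\bigl((a-\lambda)I+bJ\bigr)$ via a rank-one determinant identity, but the eigenvector argument is shorter and more transparent.)
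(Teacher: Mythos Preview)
Your argument is correct and is the standard proof of this elementary fact. Note, however, that the paper does not actually supply a proof of this lemma: it is quoted verbatim as \cite[Lemma~4.4]{bapat2010graphs} and used as a black box, so there is no ``paper's own proof'' to compare against. Your eigenvector approach (diagonalize $J$ via $\mathbf{1}$ and its orthogonal complement, then apply the polynomial $t\mapsto a+bt$) is exactly how such a result is typically established and would serve perfectly well as the omitted justification.
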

  \begin{lemma}{\cite[Theorem 1.1.6]{roger2013}}
 \label{evev}
     Let $p(t)$ be a given polynomial of degree $k.$ If $\left(\lambda,x\right)$ is an eigenvalue-eigenvector pair of $A \in M_n, \text { then } \left(p(\lambda), x\right)$ is an eigenvalue-eigenvector pair of $p(A).$
 \end{lemma}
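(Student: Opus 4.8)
The plan is to reduce the statement to the single elementary fact that $A^{j}x=\lambda^{j}x$ for every integer $j\ge 0$, and then exploit linearity of the map $p\mapsto p(A)$. Concretely, I would first write $p(t)=\sum_{j=0}^{k}c_{j}t^{j}$ for scalars $c_{0},\dots,c_{k}$, so that by the definition of a polynomial evaluated at a matrix, $p(A)=\sum_{j=0}^{k}c_{j}A^{j}$ with the convention $A^{0}=I$.

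The core step is an induction on $j$ showing $A^{j}x=\lambda^{j}x$. The base case $j=0$ is immediate since $A^{0}x=Ix=x=\lambda^{0}x$, and for the inductive step I would compute $A^{j+1}x=A\bigl(A^{j}x\bigr)=A\bigl(\lambda^{j}x\bigr)=\lambda^{j}(Ax)=\lambda^{j}(\lambda x)=\lambda^{j+1}x$, where the middle equality is the induction hypothesis and the last uses that $(\lambda,x)$ is an eigenpair of $A$. Having this, I would assemble the conclusion directly: $p(A)x=\sum_{j=0}^{k}c_{j}A^{j}x=\sum_{j=0}^{k}c_{j}\lambda^{j}x=\Bigl(\sum_{j=0}^{k}c_{j}\lambda^{j}\Bigr)x=p(\lambda)x$.

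There is no real obstacle here; the only point requiring care is to record that $x\neq 0$, which holds because $x$ is an eigenvector of $A$, so that the identity $p(A)x=p(\lambda)x$ genuinely exhibits $\bigl(p(\lambda),x\bigr)$ as an eigenvalue--eigenvector pair of $p(A)$ rather than a vacuous equation. Since this is exactly the content of \cite[Theorem 1.1.6]{roger2013}, one may alternatively simply invoke that reference; the short argument above is included only for completeness.
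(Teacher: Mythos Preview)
Your argument is correct: the induction $A^{j}x=\lambda^{j}x$ followed by linearity is the standard proof, and you correctly note that $x\neq 0$ so that $(p(\lambda),x)$ is a genuine eigenpair. The paper itself gives no proof of this lemma at all---it simply quotes it from \cite[Theorem~1.1.6]{roger2013}---so your closing remark that one may just invoke the reference is exactly what the paper does; the explicit computation you supply is a harmless addition for completeness.
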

 \begin{theorem}{\cite[Theorem 5.8]{zhang1999}}
\label{circulant matrix}
    Let $C$ be a circulant matrix in the form 
    \[
    C=
 \begin{bmatrix}
c_0 & c_1 & c_2 & \cdots & c_{n-2} & c_{n-1} \\
c_{n-1} & c_0 & c_1 & \cdots & c_{n-3} & c_{n-2} \\
c_{n} & c_1 & c_0 & \cdots & c_4 & c_3 \\
\vdots & \vdots & \vdots & \ddots & \vdots & \vdots \\
c_{n-2} & c_{n-3} & c_{n-4} & \cdots & c_0 & c_{n-1} \\
c_{n-1} & c_{n-2} & c_{n-3} & \cdots & c_1 & c_0
\end{bmatrix} \]
and let $f(\lambda)=c_0+c_1 \lambda+\cdots+c_{n-1} \lambda^{n-1}$. Then the eigenvalues of $C$ are $f(\omega^k)$, \\$k=0,1, \cdots, n-1$ where $\omega$ is an $n^{th}$ primitive root of unity.
\end{theorem}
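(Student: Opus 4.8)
The plan is to recognize $C$ as a single polynomial evaluated at the basic cyclic shift matrix and then invoke the spectral mapping principle of Lemma~\ref{evev}. Let $P$ be the $n\times n$ cyclic permutation matrix with $P_{i,i+1}=1$ for $1\le i\le n-1$, $P_{n,1}=1$, and all other entries $0$. A direct computation shows that $P^{j}$ is precisely the circulant whose first row has its unique nonzero entry (a $1$) in column $j+1$, with column indices read modulo $n$; consequently, reading $C$ as the standard circulant in which each row is the cyclic right shift of the row above it, we obtain the decomposition
\[
C=c_0 I+c_1 P+c_2 P^{2}+\cdots+c_{n-1}P^{n-1}=f(P).
\]

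Next I would compute the spectrum of $P$ explicitly. Fix a primitive $n$-th root of unity $\omega$, and for $k=0,1,\ldots,n-1$ set $x_k=\left(1,\omega^{k},\omega^{2k},\ldots,\omega^{(n-1)k}\right)^{T}$. From the description of $P$ one gets $(Px_k)_i=\omega^{ik}=\omega^{k}\cdot\omega^{(i-1)k}=\omega^{k}(x_k)_i$ for $i<n$, while the wrap-around row gives $(Px_k)_n=1=\omega^{nk}=\omega^{k}\cdot\omega^{(n-1)k}=\omega^{k}(x_k)_n$; hence $Px_k=\omega^{k}x_k$. Thus the $n$ distinct complex numbers $\omega^{0},\omega^{1},\ldots,\omega^{n-1}$ are eigenvalues of $P$ with eigenvectors $x_0,\ldots,x_{n-1}$, and since the matrix whose columns are $x_0,\ldots,x_{n-1}$ is a Vandermonde matrix in the distinct nodes $\omega^{k}$, it is nonsingular; so these are all the eigenvalues of $P$, each simple.

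Finally I would apply Lemma~\ref{evev} with the polynomial $p=f$: since $(\omega^{k},x_k)$ is an eigenvalue--eigenvector pair of $P$, the pair $(f(\omega^{k}),x_k)$ is an eigenvalue--eigenvector pair of $f(P)=C$, for each $k=0,1,\ldots,n-1$. As the vectors $x_0,\ldots,x_{n-1}$ are linearly independent, they form a basis of $\mathbb{C}^{n}$ consisting of eigenvectors of $C$, so the complete list of eigenvalues of $C$, counted with multiplicity, is exactly $f(\omega^{0}),f(\omega^{1}),\ldots,f(\omega^{n-1})$.

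There is no serious obstacle here; the proof is entirely a matter of careful bookkeeping. The two points that need attention are fixing the cyclic-shift orientation so that the eigenvalues come out as $f(\omega^{k})$ rather than $f(\omega^{-k})$ (the two lists agree as sets in any event), and checking that the decomposition $C=f(P)$ reproduces the precise entry pattern intended by the displayed matrix. Everything else reduces to a direct matrix multiplication, the nonsingularity of a Vandermonde matrix, and the already-stated Lemma~\ref{evev}.
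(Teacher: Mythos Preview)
Your argument is correct and is the standard proof: write $C=f(P)$ for the cyclic shift $P$, diagonalize $P$ via the Vandermonde/Fourier vectors $x_k$, and then invoke Lemma~\ref{evev}. There is nothing to compare against, however, because the paper does not supply a proof of this statement; it is quoted as a preliminary from \cite[Theorem~5.8]{zhang1999} and used later only as a black box. Your write-up would serve perfectly well as a self-contained proof if one were desired; the only cosmetic point is the orientation remark you already flagged (the displayed matrix in the statement has some typographical inconsistencies, but the intended circulant with first row $(c_0,c_1,\ldots,c_{n-1})$ is exactly $f(P)$ as you wrote).
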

\subsection{Complete graph}
\begin{theorem}
\label{lcomplete}
For a complete graph $K_n$ the resistance Laplacian matrix is of the form:
       \begin{align*}
           R^L(K_n)=-\frac{2}{n}J+2I
       \end{align*}
       where $J$ is the matrix with all the entries equal to $1$.
\end{theorem}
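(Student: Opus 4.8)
The plan is to compute the resistance distance matrix $R(K_n)$ explicitly from the Moore--Penrose definition, then read off $Diag(RTr)$ and assemble $R^L$.

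First I would write $L(K_n) = nI - J$. By Lemma \ref{aibj} (with $a = n$, $b = -1$) its eigenvalues are $n$ with multiplicity $n-1$ and $0$ with multiplicity $1$, the latter with eigenvector the all-ones vector. Since the Moore--Penrose inverse keeps the same eigenspaces and inverts the nonzero eigenvalues, I would look for $L^\dagger$ in the form $\alpha I + \beta J$: demanding eigenvalue $0$ on $\mathbf{1}$ forces $\alpha + n\beta = 0$, and demanding eigenvalue $1/n$ on the orthogonal complement forces $\alpha = 1/n$, so $L^\dagger = \tfrac1n I - \tfrac{1}{n^2} J$ (equivalently $\tfrac{1}{n^2}L$). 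One can verify the four Penrose conditions directly, but the shared eigenstructure of $I$, $J$ and $L$ already pins down this form, which is cleaner than guessing.

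Next I would substitute into $r(i,j) = L^\dagger_{ii} + L^\dagger_{jj} - 2 L^\dagger_{ij}$. Here $L^\dagger_{ii} = \tfrac1n - \tfrac{1}{n^2} = \tfrac{n-1}{n^2}$ and $L^\dagger_{ij} = -\tfrac{1}{n^2}$ for $i \neq j$, so $r(i,j) = \tfrac{2(n-1)}{n^2} + \tfrac{2}{n^2} = \tfrac2n$ for every pair $i \neq j$. Hence $R(K_n) = \tfrac2n(J - I)$, and every vertex has the same resistance transmission $RTr(v) = (n-1)\cdot \tfrac2n = \tfrac{2(n-1)}{n}$, so $K_n$ is resistance transmission regular. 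Finally, plugging into $R^L = Diag(RTr) - R(G) = \tfrac{2(n-1)}{n} I - \tfrac2n(J - I) = 2I - \tfrac2n J$ gives the claim.

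I do not expect a genuine obstacle; the only slightly delicate point is getting $L(K_n)^\dagger$ right, which is why I would argue via the common eigenspaces rather than by ad hoc computation. Everything after that is a one-line substitution.
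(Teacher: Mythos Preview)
Your proof is correct and follows essentially the same route as the paper: compute $L(K_n)^\dagger$ from the spectral data of $L(K_n)=nI-J$, read off $r(i,j)=\tfrac{2}{n}$ and $RTr(v)=\tfrac{2(n-1)}{n}$, and assemble $R^L=Diag(RTr)-R$. The only difference is cosmetic: the paper obtains $L^\dagger$ by carrying out Gram--Schmidt on the eigenvectors $e_i-e_1$ and multiplying out the spectral decomposition, whereas your ansatz $L^\dagger=\alpha I+\beta J$ (forced by the shared eigenspaces of $I$ and $J$) reaches the same matrix $\tfrac{1}{n}I-\tfrac{1}{n^2}J$ in two lines.
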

\begin{proof}
We know that, $L(K_n)=nI-J$.
    Therefore from Lemma \ref{aibj} the eigenvalues of $L(K_n)$ are $n$ with multiplicity $n-1$ and $0$ with multiplicity $1$.
    The eigenvectors of $L(K_n)$ corresponding to the eigenvalue $n$ are $e_i-e_1$ for $2 \leq i \leq n$. We will denote the eigenvector corresponding to the eigenvalue $0$ as $\mathbf{1}$  consisting of all entries as $1$.
To obtain the spectral decomposition of $L(K_n)$, we apply the Gram-Schmidt orthogonalization process to the eigenvectors. Thus the Moore Penrose Inverse of the Laplacian is given by:
       
\[
L^\dagger =\begin{bmatrix}
        \frac{-1}{\sqrt{2}} & \frac{-1}{\sqrt{6}} & \cdots & \frac{-1}{\sqrt{n(n-1)}} & \frac{1}{\sqrt{n}}\\
        \frac{1}{\sqrt{2}} & \frac{-1}{\sqrt{6}} & \cdots & \frac{-1}{\sqrt{n(n-1)}} & \frac{1}{\sqrt{n}}\\
        0 & \frac{2}{\sqrt{6}} & \cdots & \frac{-1}{\sqrt{n(n-1)}} & \frac{1}{\sqrt{n}}\\
        0 & 0 & \cdots & \frac{-1}{\sqrt{n(n-1)}} & \frac{1}{\sqrt{n}}\\
        \vdots & \vdots && \vdots &\vdots \\
        0 & 0 & \cdots & \frac{n-1}{\sqrt{n(n-1)}} & \frac{1}{\sqrt{n}}\\
        
    \end{bmatrix}
    \hspace{0.02cm}
    \begin{bmatrix}
        \frac{1}{n} & 0 & \cdots & 0&0\\
        0 & \frac{1}{n} & \cdots & 0&0 \\
        \vdots & \vdots &\vdots&\ddots &\vdots \\
        0 & 0 &\cdots & \frac{1}{n} & 0\\
        0&0&\cdots &0 &0    
    \end{bmatrix}
    \hspace{0.02cm}
    \begin{bmatrix}
        \frac{-1}{\sqrt{2}} & \frac{-1}{\sqrt{6}} & \cdots & \frac{-1}{\sqrt{n(n-1)}} & \frac{1}{\sqrt{n}}\\
        \frac{1}{\sqrt{2}} & \frac{-1}{\sqrt{6}} & \cdots & \frac{-1}{\sqrt{n(n-1)}} & \frac{1}{\sqrt{n}}\\
        0 & \frac{2}{\sqrt{6}} & \cdots & \frac{-1}{\sqrt{n(n-1)}} & \frac{1}{\sqrt{n}}\\
        0 & 0 & \cdots & \frac{-1}{\sqrt{n(n-1)}} & \frac{1}{\sqrt{n}}\\
        \vdots & \vdots && \vdots &\vdots \\
        0 & 0 & \cdots & \frac{n-1}{\sqrt{n(n-1)}} & \frac{1}{\sqrt{n}}\\
        
    \end{bmatrix} ^ T 
    \]
     \[ \implies L^\dagger = \begin{bmatrix}
    \frac{n-1}{n^2} & \frac{-1}{n^2}& \cdots & \frac{-1}{n^2}\\
      \frac{-1}{n^2} & \frac{n-1}{n^2} & \cdots & \frac{-1}{n^2} \\
      \vdots & \vdots &\ddots &\vdots \\
        \frac{-1}{n^2} & \frac{-1}{n^2}& \cdots & \frac{n-1}{n^2}  
    \end{bmatrix}
    \]
     Hence the resistance distance between vertices $u$ and $v$ is given by 
    $r(u,v)= L_{uu}^\dagger +L_{vv}^\dagger -2L_{uv}^\dagger =\dfrac{2}{n}$.
    The resistance transmission for all $u\in V(G)$ will be
    ${RTr}(u)=\displaystyle\sum_{v \in V} r(u, v)=2-\dfrac{2}{n}$.
    Consequently, we obtain the resistance Laplacian of the complete graph $K_n$ as 
    $R^L(K_n)=-\dfrac{2}{n}J+2I$
 
\end{proof}
\begin{corollary}
\label{lrevalues}
The eigenvalues of $R^L(K_n)$ are $2$ with multiplicity $n-1$ and $0$ with multiplicity $1$.
\end{corollary}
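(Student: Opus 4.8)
The plan is to combine the explicit matrix form of $R^L(K_n)$ established in Theorem \ref{lcomplete} with the eigenvalue formula for matrices of the shape $aI+bJ$. First I would recall from Theorem \ref{lcomplete} that $R^L(K_n)=2I-\frac{2}{n}J$, which is precisely of the form $aI+bJ$ with $a=2$ and $b=-\frac{2}{n}$. Then I would invoke Lemma \ref{aibj}, which states that the eigenvalues of $aI+bJ$ are $a$ with multiplicity $n-1$ and $a+nb$ with multiplicity $1$. Substituting $a=2$ and $b=-\frac{2}{n}$ gives the eigenvalue $2$ with multiplicity $n-1$ and the eigenvalue $a+nb=2-2=0$ with multiplicity $1$. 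Since these $n$ eigenvalues (counted with multiplicity) exhaust the spectrum of the $n\times n$ matrix $R^L(K_n)$, the claim follows.

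There is essentially no genuine obstacle here; this is a direct corollary. The only points worth a sentence of care are: (i) correctly matching the constants $a$ and $b$ to the normalization used in Lemma \ref{aibj}, and (ii) observing that the multiplicity-$1$ eigenvalue $0$ is consistent with the general fact noted earlier that $R^L$ is singular and has the all-ones vector $\mathbf{1}$ in its kernel, while $\mathbf{1}$ is also the eigenvector of $-\frac{2}{n}J+2I$ achieving the eigenvalue $a+nb=0$.
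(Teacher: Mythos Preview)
Your proposal is correct and follows essentially the same approach as the paper, which simply cites Theorem \ref{lcomplete} and Lemma \ref{aibj}; you have just spelled out the substitution $a=2$, $b=-\frac{2}{n}$ explicitly.
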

\begin{proof}
    This follows from Theorem \ref{lcomplete} and Lemma \ref{aibj}.

\end{proof}

\begin{theorem}
\label{scomplete}
 For a complete graph $K_n$ the Resistance signless Laplacian matrix is of the form:
       \begin{align*}
           R^Q(K_n)=\frac{2}{n}J+\left(2-\frac{4}{n}\right)I
       \end{align*}
       \end{theorem}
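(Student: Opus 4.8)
The plan is to reuse the resistance-distance computation already carried out in the proof of Theorem \ref{lcomplete}. There we established that in $K_n$ every off-diagonal entry of the resistance distance matrix equals $\tfrac{2}{n}$, so that $R(K_n)=\tfrac{2}{n}(J-I)$, the subtracted $I$ accounting for the zero diagonal of $R(G)$; we also found $RTr(u)=2-\tfrac{2}{n}$ for every vertex $u$, whence $Diag(RTr)=\left(2-\tfrac{2}{n}\right)I$. From here the claim is a one-line matrix identity.

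Concretely, I would add the two matrices: $R^Q(K_n)=Diag(RTr)+R(K_n)=\left(2-\tfrac{2}{n}\right)I+\tfrac{2}{n}(J-I)$, and then collect the coefficient of $I$, namely $2-\tfrac{2}{n}-\tfrac{2}{n}=2-\tfrac{4}{n}$, to obtain $R^Q(K_n)=\tfrac{2}{n}J+\left(2-\tfrac{4}{n}\right)I$, exactly the stated form.

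There is no genuine obstacle here: once the resistance distances in $K_n$ are in hand — which Theorem \ref{lcomplete} supplies — the result follows by routine simplification. The only point requiring a moment's care is the zero diagonal of $R(G)$, which is precisely why $R(K_n)$ is $\tfrac{2}{n}(J-I)$ and not $\tfrac{2}{n}J$. As with Corollary \ref{lrevalues}, one can append a short corollary: combining this expression with Lemma \ref{aibj} shows that the eigenvalues of $R^Q(K_n)$ are $2-\tfrac{4}{n}$ with multiplicity $n-1$ and $4-\tfrac{4}{n}$ with multiplicity $1$.
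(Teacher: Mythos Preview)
Your proposal is correct and follows essentially the same approach as the paper: both invoke the values $r(u,v)=\tfrac{2}{n}$ and $RTr(u)=2-\tfrac{2}{n}$ established in Theorem~\ref{lcomplete}, then form $Diag(RTr)+R(K_n)$ and simplify to $\tfrac{2}{n}J+\left(2-\tfrac{4}{n}\right)I$. Your write-up is slightly more explicit in isolating $R(K_n)=\tfrac{2}{n}(J-I)$, but the argument is the same.
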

       \begin{proof}
 From the proof of Theorem \ref{lcomplete}, we deduce that the resistance between any two vertices $u$ and $v$ in $V(K_n)$ is determined as $r(u,v) = \dfrac{2}{n}$, and the resistance transmission for all $u \in V(G)$ is given by ${RTr}(u) = 2 - \dfrac{2}{n}$. Using these results, we get $R^Q(K_n)= Diag(RTr)+R(K_n)
    =\dfrac{2}{n}J+\left(2-\dfrac{4}{n}\right)I$. 
 
       \end{proof}
       \begin{corollary}
           The eigenvalues of $R^Q(K_n)$ are $2-\cfrac{4}{n}$ with multiplicity $n-1$ and $4-\dfrac{4}{n}$ with multiplicity $1$.
       \end{corollary}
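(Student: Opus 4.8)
The plan is to reduce this immediately to Lemma \ref{aibj}. By Theorem \ref{scomplete} we have the explicit form
\[
R^Q(K_n) = \frac{2}{n}J + \left(2 - \frac{4}{n}\right)I,
\]
which is exactly of the shape $aI + bJ$ with $a = 2 - \frac{4}{n}$ and $b = \frac{2}{n}$. So the first step is simply to record this identification.

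The second and final step is to apply Lemma \ref{aibj}, which tells us that the eigenvalues of $aI + bJ$ are $a$ with multiplicity $n-1$ and $a + nb$ with multiplicity $1$. Substituting $a = 2 - \frac{4}{n}$ gives the first eigenvalue $2 - \frac{4}{n}$ with multiplicity $n-1$, and substituting into $a + nb = \left(2 - \frac{4}{n}\right) + n\cdot\frac{2}{n} = 4 - \frac{4}{n}$ gives the second eigenvalue $4 - \frac{4}{n}$ with multiplicity $1$. There is no real obstacle here; the only thing to be careful about is the arithmetic simplification $a + nb = 4 - \frac{4}{n}$, and one might add the sanity check that the eigenvalues are all nonnegative for $n \geq 2$, consistent with $R^Q(K_n)$ being positive semidefinite (indeed positive definite for $n \geq 3$). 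One could alternatively derive this from Corollary \ref{lrevalues} together with the observation that $R^Q(K_n) = R^L(K_n) + 2R(K_n) = R^L(K_n) + \tfrac{2}{n}(J - I)\cdot\!$ wait—more cleanly, $R^Q(K_n) - R^L(K_n) = 2R(K_n) = \tfrac{4}{n}(J-I)$, and all three matrices are simultaneously diagonalized by the eigenbasis $\{\mathbf{1}\} \cup \{e_i - e_1\}$; but the direct route via Lemma \ref{aibj} is shortest.
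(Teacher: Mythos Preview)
Your proof is correct and follows exactly the same route as the paper's own argument: invoke Theorem \ref{scomplete} to write $R^Q(K_n)=aI+bJ$ with $a=2-\tfrac{4}{n}$ and $b=\tfrac{2}{n}$, then apply Lemma \ref{aibj}. The extra remarks (positive semidefiniteness, the alternative via $R^L$) are fine as commentary but unnecessary for the proof itself; you may trim the trailing digression beginning with ``wait'' for a cleaner write-up.
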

       \begin{proof}
   This follows from Theorem \ref{scomplete} and Lemma \ref{aibj}.    

\end{proof}

\subsection{Complete bipartite graph}
\begin{theorem}
\label{lbipartite}
    For a complete bipartite graph $K_{p,q}$, the resistance Laplacian matrix is of the form:
 \begin{align*}
           R^L(K_{p,q})=\begin{bmatrix}
               P & Q\\
               R & S
           \end{bmatrix}
       \end{align*}
where 

$\begin{aligned}
& {P}=\left[\frac{2 p}{q}+\frac{p+q-1}{p}\right] I_{p \times p}-\frac{2}{q} J_{p \times p} , \quad {Q}=-\left[\frac{p+q-1}{p q}\right] J_{p \times q} \\
& {R}=-\left[\frac{p+q-1}{p q}\right] J_{q \times p} ,\quad {S}=\left[\frac{2 q}{p}+\frac{p+q-1}{q}\right] I_{q \times q}-\frac{2}{p} J_{q \times q}
\end{aligned}$
\begin{proof}
    The Laplacian of complete bipartite graph is by:
    \[L(K_{p, q})=\left[\begin{array}{cc}
q I_{p \times p} & -J_{p \times q} \\
-J_{q \times p} & p J_{q \times q}
\end{array}\right]\]

Each block of $L(K_{p, q})$ has a constant row sum, leading to the corresponding equitable quotient matrix:
\[Q=\left[\begin{array}{cc}
q & -q \\
-p & p
\end{array}\right] .\]
The eigenvalues of $Q$ are $0$ and $p+q$. Therefore, according to Theorem \ref{equitable quotient}, we deduce that two of the eigenvalues of $L(K_{p, q})$ are $0$ and $p+q$ each with a multiplicity of $1$. The eigenvector corresponding to the eigenvalue $p+q$ is characterized by the vector with entries $\dfrac{-q}{2}$ for the first $p$ positions and $1$ for the remaining entries. On the other hand, the eigenvector associated with the eigenvalue $0$ is $\mathbf{1}$. Moreover, the remaining two eigenvalues of $L(K_{p, q})$ are $q$ and $p$, with multiplicities of $p-1$ and $q-1$, respectively. The eigenvectors corresponding to the eigenvalue $q$ are $e_i-e_1$ for $2 \leq i \leq p$. Similarly, the eigenvectors associated with the eigenvalue $p$ are denoted as $e_i-e_{p+1}$ for $p+2 \leq i \leq p+q$. Using Gram Schmidt Orthogonalization we obtain the Moore Penrose Inverse of $L(K_{p, q})$ as

\[ L^\dagger =\begin{bmatrix}
    P'&Q'\\
    R' & S'
\end{bmatrix}\]

where

$\begin{aligned}
& {P'}=\frac{1}{q} I+\left(\frac{q^2}{p q(p+q)^2}-\frac{1}{p q}\right) J_{p \times p}, \quad {Q'}=-\left(\frac{pq}{pq (p+q)^2}\right) J_{p \times q} \\
& {R'}=-\left(\frac{pq}{pq (p+q)^2}\right) J_{q \times p}, \quad {S'}=\frac{1}{p} I+\left(\frac{p^2}{p q(p+q)^2}-\frac{1}{p q}\right) J_{q \times q}
\end{aligned}$

\vspace{0.5cm}
Consequently, the resistance distance matrix of the Laplacian is given by
\[R(K_{p,q})=\begin{bmatrix}
    P''&Q''\\
    R'' & S''
\end{bmatrix}\]
where

$\begin{aligned}
& P''=-\frac{2}{q} I+\frac{2}{q}J_{p \times p}, \quad Q''=\left(\frac{p+q-1}{pq}\right) J_{p \times q} \\
& R''=\left(\frac{p+q-1}{pq}\right) J_{q \times p}, \quad S''=-\frac{2}{p} I+\frac{2}{p} J_{q \times q}
\end{aligned}$

The resistance transmission for $u\in V(G)$ will be as follows
\begin{align*}
	RTr(u) = \left\lbrace\begin{array}{cc}
		\dfrac{2(p-1)}{q}+\dfrac{p+q-1}{p}, & 1 \leq u \leq p\\
		\dfrac{2(q-1)}{p}+\dfrac{p+q-1}{q}, & p+1 \leq u \leq p+q
	\end{array}\right. 
\end{align*} 

Hence the Resistance Laplacian of Complete Bipartite Graph is given by:
 \[R^L(K_{p,q})= Diag(RTr)-R(K_{p,q})\]
  \begin{align*}
           R^L(K_{p,q})=\begin{bmatrix}
               P & Q\\
               R & S
           \end{bmatrix}
       \end{align*}
where 

$\begin{aligned}
& {P}=\left[\frac{2 p}{q}+\frac{p+q-1}{p}\right] I_{p \times p}-\frac{2}{q} J_{p \times p} , \quad {Q}=-\left[\frac{p+q-1}{p q}\right] J_{p \times q} \\
& {R}=-\left[\frac{p+q-1}{p q}\right] J_{q \times p}, \quad {S}=\left[\frac{2 q}{p}+\frac{p+q-1}{q}\right] I_{q \times q}-\frac{2}{p} J_{q \times q}
\end{aligned}$

\end{proof}
\begin{corollary}
    The spectrum of $ R^L(K_{p,q})$ is given as :
    \begin{align*}
     \left\lbrace\begin{array}{ccccccc}
		0 && \dfrac{(p+q)^2-p-q}{p q}& & \dfrac{2 p}{q}+\dfrac{p+q-1}{p} && \dfrac{2 q}{p}+\dfrac{p+q-1}{q}\\
		1 && 1 && p-1 && q-1
	\end{array}\right\rbrace. 
 \end{align*}
\end{corollary}
\begin{proof}
    From Theorem \ref{lbipartite} we can deduce that each block of $R^L(K_{p,q})$ has constant row sum. Consequently, the corresponding equitable quotient matrix can be expressed as
\[Q=\left[\begin{array}{cc}
\frac{p+q-1}{p} & -\left(\frac{p+q-1}{p}\right) \\
-\left(\frac{p+q-1}{q}\right) & \frac{p+q-1}{q}
\end{array}\right]\]

The eigenvalues of $Q$ are $0$ and $\dfrac{(p+q)^2-p-q}{p q}$ with each having multiplicity $1$.
Thus, by applying Theorem \ref{equitable quotient}, we can deduce that $R^L(K_{p,q})$ possesses two eigenvalues: $0$ and $\dfrac{(p+q)^2-p-q}{p q}$.
Consider the vector $e_i-e_1$ for $2\leq i \leq p$. Then, 
     \[
     R^L(K_{p,q}) \cdot \left(e_i-e_1\right) =\left( \dfrac{2 p}{q}+\dfrac{p+q-1}{p}\right)\cdot \left(e_i-e_1\right)\hspace{0.4cm}
     \text{for all} \hspace{0.2cm} 2\leq i \leq p
\]
Similarly if we consider vectors $e_i-e_{p+1}$ for all $p+2\leq i \leq p+q$. Then,
     \[
     R^L(K_{p,q}) \cdot \left(e_i-e_{p+1}\right) =\left( \dfrac{2 q}{p}+\dfrac{p+q-1}{q}\right)\cdot \left(e_i-e_{p+1}\right)\hspace{0.4cm}
     \text{for all} \hspace{0.2cm} p+2\leq i \leq p+q
\]
Thus, the spectrum of $ R^L(K_{p,q})$ is given as :
    \begin{align*}
     \left\lbrace\begin{array}{ccccccc}
		0 && \dfrac{(p+q)^2-p-q}{p q}& & \dfrac{2 p}{q}+\dfrac{p+q-1}{p} && \dfrac{2 q}{p}+\dfrac{p+q-1}{q}\\
		1 && 1 && p-1 && q-1
	\end{array}\right\rbrace. 
 \end{align*}

\end{proof}
\begin{theorem}
\label{sbipartite}
    For a complete bipartite graph $K_{p,q}$, the resistance Signless Laplacian matrix is of the form:
 \begin{align*}
           R^Q(K_{p,q})=\begin{bmatrix}
               A & B\\
               C & D
           \end{bmatrix}
       \end{align*}
where 

$\begin{aligned}
&A=\left[\dfrac{2(p-2)}{q}+\dfrac{p+q-1}{p}\right]I_{p \times p}+\dfrac{2}{q}J_{p \times p}, \quad B=\left[\dfrac{p+q-1}{pq}\right]J_{p \times q}\\
&C=\left[\dfrac{p+q-1}{pq}\right]J_{q \times p}, \quad D=\left[\dfrac{2(q-2)}{p}+\dfrac{p+q-1}{q}\right]I_{q \times q}+\dfrac{2}{p}J_{q \times q}
\end{aligned}$
\end{theorem}

\end{theorem}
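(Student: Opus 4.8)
The plan is to build $R^L(K_{p,q})=Diag(RTr)-R(K_{p,q})$ from the bottom up: compute the Moore--Penrose inverse $L^{\dagger}$ of the Laplacian, read the pairwise resistances off its entries, sum these to the resistance transmissions $RTr$, and subtract. I would start from the block Laplacian
\[
L(K_{p,q})=\begin{bmatrix} qI_{p} & -J_{p\times q}\\ -J_{q\times p} & pI_{q}\end{bmatrix},
\]
which holds because each vertex of the first part has degree $q$ and each vertex of the second part has degree $p$.

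To obtain $L^{\dagger}$ I would diagonalise $L(K_{p,q})$. The bipartition is equitable, so by Theorem \ref{equitable quotient} its $2\times 2$ quotient contributes the two across-parts eigenvalues $0$ (eigenvector $\mathbf 1$) and $p+q$ (a vector constant on each part); the rest of the spectrum is $q$ with multiplicity $p-1$ on the zero-sum vectors $e_i-e_1$ inside the first part, and $p$ with multiplicity $q-1$ on the zero-sum vectors $e_i-e_{p+1}$ inside the second part. After Gram--Schmidt orthonormalisation the spectral sum $L^{\dagger}=\sum_{\lambda_k\neq 0}\lambda_k^{-1}u_ku_k^{\mathsf T}$ (omitting only the $\lambda=0$ term) yields the block matrix
\[
L^{\dagger}=\begin{bmatrix} P' & Q'\\ R' & S'\end{bmatrix}
\]
with every block again an $aI+bJ$ combination. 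A shortcut I might use instead is $L^{\dagger}=\bigl(L+\tfrac1{p+q}J\bigr)^{-1}-\tfrac1{p+q}J$, inverting the $aI+bJ$ blocks of $L+\tfrac1{p+q}J$ directly; either route is mechanical.

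From $L^{\dagger}$ the resistances follow by $r(i,j)=L^{\dagger}_{ii}+L^{\dagger}_{jj}-2L^{\dagger}_{ij}$: two vertices in the same part give $r=\tfrac2q$ (respectively $\tfrac2p$) and two vertices in opposite parts give $r=\tfrac{p+q-1}{pq}$, which assembles into the diagonal-free blocks of $R(K_{p,q})$. Summing each row gives $RTr=\tfrac{2(p-1)}{q}+\tfrac{p+q-1}{p}$ on the first part and $\tfrac{2(q-1)}{p}+\tfrac{p+q-1}{q}$ on the second; placing these on the diagonal and subtracting $R(K_{p,q})$ reproduces exactly the blocks $P,Q,R,S$ as claimed. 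As a byproduct the spectrum also drops out, since the blocks have constant row sums: the equitable $2\times 2$ quotient gives the eigenvalues $0$ and $\tfrac{(p+q)^2-p-q}{pq}$, while $e_i-e_1$ and $e_i-e_{p+1}$ are eigenvectors for $\tfrac{2p}{q}+\tfrac{p+q-1}{p}$ and $\tfrac{2q}{p}+\tfrac{p+q-1}{q}$ with multiplicities $p-1$ and $q-1$.

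The one delicate point, and the step on which I would spend the care, is the middle one: pushing the $aI+bJ$ block structure accurately through the inversion of $L$ (or through the Gram--Schmidt sums) so that $P',Q',R',S'$ emerge with exactly the stated rational coefficients. Everything after that is routine, and I would sanity-check the final answer against the constraint that every row of $R^{L}(K_{p,q})$ must sum to $0$.
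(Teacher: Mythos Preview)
Your proposal is correct and follows essentially the same route as the paper: diagonalise $L(K_{p,q})$ via the equitable quotient plus the within-part zero-sum eigenvectors, form $L^{\dagger}$ by spectral inversion (the paper uses Gram--Schmidt exactly as you describe), read off the three resistance values $\tfrac{2}{q},\tfrac{2}{p},\tfrac{p+q-1}{pq}$, sum to get $RTr$, and subtract. The only addition is your alternative shortcut $L^{\dagger}=(L+\tfrac{1}{p+q}J)^{-1}-\tfrac{1}{p+q}J$, which the paper does not mention but which would indeed streamline the block inversion.
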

\begin{proof}
    From the proof of Theorem \ref{lbipartite} we know that the resistance distance matrix of complete bipartite graph is given by: 
\[R(K_{p,q})=\begin{bmatrix}
    P''&Q''\\
    R'' & S''
\end{bmatrix}\]
where

$\begin{aligned}
& P''=-\frac{2}{q} I+\frac{2}{q}J_{p \times p}, ]\quad Q''=\left(\frac{p+q-1}{pq}\right) J_{p \times q} \\
& R''=\left(\frac{p+q-1}{pq}\right) J_{q \times p}, \quad S''=-\frac{2}{p} I+\frac{2}{p} J_{q \times q}
\end{aligned}$

and the resistance transmission for $u\in V(K_{p,q})$ is:
\begin{align*}
	RTr(u) = \left\lbrace\begin{array}{cc}
		\dfrac{2(p-1)}{q}+\dfrac{p+q-1}{p}, & 1 \leq u \leq p\\
		\dfrac{2(q-1)}{p}+\dfrac{p+q-1}{q}, & p+1 \leq u \leq p+q
	\end{array}\right. 
\end{align*} 

Therefore the resistance Laplacian of complete bipartite graph is given by:
 \[R^Q(K_{p,q})= Diag(RTr)+R(K_{p,q})\]
 \begin{align*}
           \implies R^Q(K_{p,q})=\begin{bmatrix}
               A & B\\
               C & D
           \end{bmatrix}
       \end{align*}
where 

$\begin{aligned}
&A=\left[\dfrac{2(p-2)}{q}+\dfrac{p+q-1}{p}\right]I_{p \times p}+\dfrac{2}{q}J_{p \times p}, \quad B=\left[\dfrac{p+q-1}{pq}\right]J_{p \times q}\\
&C=\left[\dfrac{p+q-1}{pq}\right]J_{q \times p}, \quad D=\left[\dfrac{2(q-2)}{p}+\dfrac{p+q-1}{q}\right]I_{q \times q}+\dfrac{2}{p}J_{q \times q}
\end{aligned}$

\end{proof}
\begin{corollary}
    The spectrum of $ R^Q(K_{p,q})$ is given as :
    \begin{footnotesize}
    \begin{align*}
     \left\lbrace\begin{array}{cccccc}
		\frac{5 p^2+(2 p-5) q+5 q^2 \pm\left[\sqrt{9 p^2-14 p q+9 q^2(p+q-1)}\right]-5 p}{2 p q } && 2\left(\frac{p-2}{q}\right)+\frac{p+q-1}{p}& & 2\left(\frac{q-2}{p}\right)+\frac{p+q-1}{q} \\
		1 && p-1 && q-1
	\end{array}\right\rbrace. 
 \end{align*}
 \end{footnotesize}
\end{corollary}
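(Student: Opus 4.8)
The plan is to follow the same strategy used for the spectrum of $R^L(K_{p,q})$: exploit the equitable bipartition coming from the two colour classes, and then complete the spectrum with explicit eigenvectors that are ``balanced'' on each class. By Theorem \ref{sbipartite}, writing the vertex set as the union of $\{1,\dots,p\}$ and $\{p+1,\dots,p+q\}$, each of the four blocks $A,B,C,D$ of $R^Q(K_{p,q})$ has the form $\alpha I+\beta J$, so each block has constant row sum; hence this bipartition is equitable. Computing the four row sums gives the equitable quotient matrix
\[
Q=\begin{bmatrix}
\dfrac{4(p-1)}{q}+\dfrac{p+q-1}{p} & \dfrac{p+q-1}{p}\\
\dfrac{p+q-1}{q} & \dfrac{4(q-1)}{p}+\dfrac{p+q-1}{q}
\end{bmatrix},
\]
and by Theorem \ref{equitable quotient} the two eigenvalues of $Q$ are eigenvalues of $R^Q(K_{p,q})$, each of multiplicity $1$. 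Solving the characteristic quadratic of $Q$, and in particular simplifying the discriminant $(Q_{11}-Q_{22})^2+4\,Q_{12}Q_{21}$, which factors as $(p+q-1)^2(9p^2-14pq+9q^2)/(p^2q^2)$ (and is positive since $9p^2-14pq+9q^2=9(p-q)^2+4pq>0$), produces exactly the $\pm$ pair of eigenvalues displayed in the corollary.

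Next I would account for the remaining $p+q-2$ eigenvalues using vectors that vanish under the all-ones blocks. For $2\le i\le p$ set $x=e_i-e_1$; this is supported on the first class and its coordinates sum to $0$, so $J_{p\times p}x=0$ and $J_{q\times p}x=0$. Hence $Bx=0$, $Cx=0$ and $Ax=\big(\tfrac{2(p-2)}{q}+\tfrac{p+q-1}{p}\big)x$, so $x$ is an eigenvector of $R^Q(K_{p,q})$ with eigenvalue $\tfrac{2(p-2)}{q}+\tfrac{p+q-1}{p}$, and the $p-1$ such vectors are linearly independent. Symmetrically, the vectors $e_i-e_{p+1}$ for $p+2\le i\le p+q$ give $q-1$ linearly independent eigenvectors with eigenvalue $\tfrac{2(q-2)}{p}+\tfrac{p+q-1}{q}$.

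Finally I would check that nothing is missed: the two eigenvectors produced by the quotient matrix are constant on each colour class, while the other $p+q-2$ eigenvectors have zero sum on each class, so all $p+q$ eigenvectors are jointly linearly independent; since $1+1+(p-1)+(q-1)=p+q=n$, these exhaust the spectrum, with the stated multiplicities. The only step that is more than a mechanical verification is the algebraic simplification of the discriminant of $Q$ into the closed form appearing in the statement; I expect that to be the main (though still routine) computational obstacle.
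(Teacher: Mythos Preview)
Your proposal is correct and follows essentially the same approach as the paper: form the equitable quotient matrix from the bipartition, apply Theorem~\ref{equitable quotient} for the two ``global'' eigenvalues, and use the vectors $e_i-e_1$ and $e_i-e_{p+1}$ for the remaining $p-1$ and $q-1$ eigenvalues. Your write-up is in fact slightly more complete than the paper's, since you explicitly verify the discriminant factorisation $(p+q-1)^2(9p^2-14pq+9q^2)/(p^2q^2)$ and check that the total collection of eigenvectors is linearly independent of size $p+q$.
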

\begin{proof}
    From Theorem \ref{sbipartite} we can deduce that each block of $R^Q(K_{P,Q})$ has constant row sum. Therefore the equitable quotient matrix corresponding to it will be:
\[Q=\left[\begin{array}{cc}
\frac{2(p-2)}{q}+\frac{2(p-1)}{q}+\frac{p+q-1}{p} & \frac{p+q-1}{p} \\
\frac{p+q-1}{q} & \frac{2(q-2)}{p}+\frac{2(q-1)}{p}+\frac{p+q-1}{q}
\end{array}\right]\]

The eigenvalues of $Q$ are 
\[\dfrac{5 p^2+(2 p-5) q+5 q^2 \pm\left[\sqrt{9 p^2-14 p q+9 q^2(p+q-1)}\right]-5 p}{2 p q }\] with multiplicity $1$. Therefore from Theorem \ref{equitable quotient} these two are the eigenvalues of $R^L(K_{p,q})$.
Consider the vector $e_i-e_1$ for $2\leq i \leq p$. Then,
     \[
     R^Q(K_{p,q}) \cdot \left(e_i-e_1\right) = \left[2\left(\frac{p-2}{q}\right)+\frac{p+q-1}{p}\right] \cdot \left(e_i-e_1\right)\hspace{0.4cm}
     \text{for all} \hspace{0.2cm} 2\leq i \leq p
\]
Similarly if we consider vectors $e_i-e_{p+1}$ for all $p+2\leq i \leq p+q$. Then,
     \[
     R^Q(K_{p,q}) \cdot \left(e_i-e_{p+1}\right) =\left[2\left(\frac{q-2}{p}\right)+\frac{p+q-1}{q}\right] \cdot \left(e_i-e_{p+1}\right)\hspace{0.4cm}
     \text{for all} \hspace{0.2cm} p+2\leq i \leq p+q
\]

Thus, the spectrum of $ R^Q(K_{p,q})$ is given as :
\begin{footnotesize}
    
    \begin{align*}
     \left\lbrace\begin{array}{cccccc}
		\frac{5 p^2+(2 p-5) q+5 q^2 \pm\left[\sqrt{9 p^2-14 p q+9 q^2(p+q-1)}\right]-5 p}{2 p q } && 2\left(\frac{p-2}{q}\right)+\frac{p+q-1}{p}& & 2\left(\frac{q-2}{p}\right)+\frac{p+q-1}{q} \\
		1 && p-1 && q-1
	\end{array}\right\rbrace. 
 \end{align*}
 \end{footnotesize}

\end{proof}
\subsection{Cycle}
\begin{lemma}
\label{resi}
    Let $G$ be a connected graph and $R(G)$ be the resistance matrix of $G$. Let $\gamma_1 \geq \gamma_2 \geq \cdots \geq \gamma_n$ be the eigenvalues of $R(G)$. Suppose $G$ is a $k$- resistance transmission regular then we have
    \newline
        (\romannumeral 1) The eigenvalues of $R^L(G)$ are $k-\gamma_n \geq k-\gamma_{n-1} \geq \cdots k-\gamma_1$.
        \newline
        (\romannumeral 2) The eigenvalues of $R^Q(G)$ are $k+\gamma_1 \geq k+\gamma_2 \geq \cdots k+\gamma_n$.
    
\end{lemma}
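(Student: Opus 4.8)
The plan is to observe that the $k$-resistance transmission regularity hypothesis makes $Diag(RTr)$ a scalar matrix, which reduces both assertions to the elementary fact that adding a multiple of the identity to a symmetric matrix shifts every eigenvalue by the same constant (while possibly reversing the order, in the Laplacian case, because of the minus sign).

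First I would record that $RTr(v)=k$ for every $v\in V(G)$ means precisely $Diag(RTr)=kI_n$. Substituting into the definitions from Section~2 then gives $R^L(G)=kI_n-R(G)$ and $R^Q(G)=kI_n+R(G)$. Next I would invoke Lemma~\ref{evev} with the degree-one polynomials $p(t)=k-t$ and $p(t)=k+t$: if $(\gamma,x)$ is an eigenvalue–eigenvector pair of $R(G)$, then $(k-\gamma,x)$ is an eigenvalue–eigenvector pair of $p(R(G))=R^L(G)$, and $(k+\gamma,x)$ is an eigenvalue–eigenvector pair of $R^Q(G)$. Since $R(G)$ is symmetric it has $n$ linearly independent (indeed orthogonal) eigenvectors, so this accounts for all $n$ eigenvalues of $R^L(G)$ and of $R^Q(G)$, with multiplicities preserved.

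Finally I would sort the lists. For part~(ii), $t\mapsto k+t$ is increasing, so $\gamma_1\ge\gamma_2\ge\cdots\ge\gamma_n$ gives $k+\gamma_1\ge k+\gamma_2\ge\cdots\ge k+\gamma_n$ directly. For part~(i), $t\mapsto k-t$ is decreasing, so the order is reversed: $k-\gamma_n\ge k-\gamma_{n-1}\ge\cdots\ge k-\gamma_1$, which is exactly the claimed ordering. There is essentially no obstacle here; the only thing to be careful about is the bookkeeping of the index reversal in part~(i), and to state explicitly that $R(G)$, $R^L(G)$, $R^Q(G)$ share the same eigenvectors, which is what the polynomial substitution buys us.
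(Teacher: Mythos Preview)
Your proposal is correct and follows essentially the same argument as the paper: both reduce to $R^L(G)=kI-R(G)$ and $R^Q(G)=kI+R(G)$ via the regularity hypothesis, then apply Lemma~\ref{evev} with the linear polynomials $p(t)=k\mp t$ to read off the shifted eigenvalues and sort them. Your version is slightly more explicit about why this captures \emph{all} eigenvalues (the symmetric matrix $R(G)$ has a full orthogonal eigenbasis), which the paper leaves implicit.
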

\begin{proof}
 (\romannumeral 1)   We have $RTr(v)=k$ for all $v \in G.$
    Therefore  
    \[\begin{aligned}
R^L(G) & ={Diag}(R T r)-R(G) ={Diag}\left(k, k, \cdots, k\right)-R(G) =k I-R(G)
\end{aligned}\]
Now the eigenvalues of $R(G)$ are $\gamma_1 \geq \gamma_2 \geq \cdots \gamma_n$.
Therefore the eigenvalues of $-R(G)$ are $-\gamma_n \geq -\gamma_{n-1} \geq \cdots -\gamma_1$.
Consider $p(t)=k-t$. Then by Lemma \ref{evev} we have eigenvalues of $p(R(G))=kI-R(G)$ as 
$p(\gamma_1)=k-\gamma_1,p(\gamma_2)=k-\gamma_2,\cdots,p(\gamma_n)=k-\gamma_n$.
Therefore the eigenvalues of $R^L(G)$ are $k-\gamma_n \geq k-\gamma_{n-1} \geq \cdots k-\gamma_1$.

(\romannumeral 2) Consider $p(t)=k+t$. Then by Lemma \ref{evev} we have eigenvalues of $p(R(G))=kI+R(G)$ as 
$p(\gamma_1)=k+\gamma_1,p(\gamma_2)=k+\gamma_2,\cdots,p(\gamma_n)=k+\gamma_n$.
Therefore The eigenvalues of $R^Q(G)$ are $k+\gamma_1 \geq k+\gamma_2 \geq \cdots k+\gamma_n$.

\end{proof}

\begin{theorem}
\label{lcycle}
The resistance Laplacian matrix of a cycle on $n$ vertices is given by,
\[ R^L(C_n)=\operatorname{circ}\left(\frac{n^2-1}{6}, \frac{-(n-1)}{n}, \frac{-2(n-2)}{n}, \ldots, \frac{-(n-1)(n-(n-1))}{n}\right).\]
\end{theorem}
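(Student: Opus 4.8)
The plan is to compute the resistance distance between every pair of vertices of $C_n$, read off $R(C_n)$, compute the (common) resistance transmission, and then subtract. First I would label the vertices $0,1,\dots,n-1$ cyclically and observe that two vertices at cyclic distance $d := \min\{|i-j|,\,n-|i-j|\}$ are joined by exactly two internally disjoint paths, of lengths $d$ and $n-d$. Replacing each edge by a unit resistor, these paths are resistors of resistance $d$ and $n-d$ in parallel, so the effective resistance between the two vertices is $\bigl(\tfrac1d+\tfrac1{n-d}\bigr)^{-1}=\tfrac{d(n-d)}{n}$; since the effective resistance of the network agrees with the quantity $L^\dagger_{ii}+L^\dagger_{jj}-2L^\dagger_{ij}$ defining $r(i,j)$, this gives $r(i,j)=\tfrac{d(n-d)}{n}$. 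Because $C_n$ is the Cayley graph of $\mathbb{Z}_n$ and $r(i,j)$ depends only on $i-j$, the matrix $R(C_n)$ is circulant, with $k$-th off-diagonal entry $\tfrac{k(n-k)}{n}$ for $k=1,\dots,n-1$.

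Next I would compute the resistance transmission. By vertex-transitivity every vertex has the same transmission, $RTr(v)=\sum_{k=1}^{n-1}\tfrac{k(n-k)}{n}=\tfrac1n\bigl(n\sum_{k=1}^{n-1}k-\sum_{k=1}^{n-1}k^2\bigr)$, and inserting the standard formulas $\sum_{k=1}^{n-1}k=\tfrac{n(n-1)}2$ and $\sum_{k=1}^{n-1}k^2=\tfrac{(n-1)n(2n-1)}6$ collapses this to $\tfrac{n^2-1}{6}$. Hence $C_n$ is $\tfrac{n^2-1}{6}$-resistance transmission regular, so $\mathrm{Diag}(RTr)=\tfrac{n^2-1}{6}I$. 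Finally, $R^L(C_n)=\mathrm{Diag}(RTr)-R(C_n)=\tfrac{n^2-1}{6}I-R(C_n)$ is a difference of circulant matrices, hence circulant; its diagonal entry is $\tfrac{n^2-1}{6}$ and its $k$-th off-diagonal entry is $-\tfrac{k(n-k)}{n}$, which is precisely $\operatorname{circ}\!\bigl(\tfrac{n^2-1}{6},\,\tfrac{-(n-1)}{n},\,\tfrac{-2(n-2)}{n},\dots,\tfrac{-(n-1)(n-(n-1))}{n}\bigr)$, proving the claim.

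The only step carrying real content is the closed form $r(i,j)=\tfrac{d(n-d)}{n}$; everything else is the elementary summation above and bookkeeping with circulant matrices. The quickest justification of that formula is the parallel-resistor reduction just described, which I would expect to be the main (and essentially only) obstacle to address carefully. If one prefers to stay inside the Moore--Penrose formalism used for $K_n$ and $K_{p,q}$ earlier, one can instead diagonalize the circulant Laplacian $L(C_n)=\operatorname{circ}(2,-1,0,\dots,0,-1)$, whose eigenvalues are $2-2\cos(2\pi k/n)$, write $L^\dagger$ as the circulant with eigenvalues $\bigl(2-2\cos(2\pi k/n)\bigr)^{-1}$ for $k\neq 0$ and $0$ for $k=0$, and then reduce $r(i,j)$ to the trigonometric sum $\tfrac2n\sum_{k=1}^{n-1}\tfrac{1-\cos(2\pi kd/n)}{2-2\cos(2\pi k/n)}$; evaluating this sum to $\tfrac{d(n-d)}{n}$ is the one genuinely fiddly computation, and it is exactly what the electrical argument lets us skip.
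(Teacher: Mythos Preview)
Your proposal is correct and follows essentially the same route as the paper: both arguments observe that any two vertices of $C_n$ are joined by two internally disjoint paths whose parallel resistance gives $r(i,j)=\tfrac{k(n-k)}{n}$, deduce that $R(C_n)$ is circulant, compute $RTr(v)=\tfrac{n^2-1}{6}$ via the elementary sums $\sum k$ and $\sum k^2$, and subtract. Your write-up is in fact more explicit than the paper's about why the parallel-resistor reduction is valid, and your alternative Moore--Penrose sketch is extra content the paper does not include.
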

\begin{proof}
If we consider any two vertices in a cycle say $i$ and $j$ then there are two paths to go from $i$ to $j$, one in clockwise direction and other in anticlockwise direction. From the definition of resistance for $i,j \in V(G)$ we have
\[\begin{aligned}
a_{i (k+1+(i-1))} & =r(i, k+1+(i-1)) =\frac{\left|i-(k+1+(i-1))\right| [n-\left| i-(k+1+(i-1))\right|]}{n} \\
& =\frac{|i-k-1-i+1|[n-|i-k-1-i+1|]}{n}  =\frac{|-k|[n-|-k|]}{n} \\
& =\frac{k[n-k]}{n}  =a_{1(k+1)}
\end{aligned}\]
Now for $i=1$ and $1 \leq j \leq n$ we have 
\[r(1, j)=\left[0, \frac{n-1}{n}, \frac{2(n-2)}{n}, \ldots, \frac{k(n-k)}{n}, \ldots, \frac{(n-1)\left(n-(n-1)\right)}{n}\right].\]
Let $a_{1 (k+1)}=\frac{k(n-k)}{n}$. Then for $i^{th}$ row we will have 
\[\begin{aligned}
a_{i (k+1+(i-1))} & =r(i, k+1+(i-1)) =\frac{\left|i-(k+1+(i-1))\right| [n-\left| i-(k+1+(i-1))\right|]}{n} \\
& =\frac{|-k|[n-|-k|]}{n} =\frac{k[n-k]}{n}  =a_{1(k+1)}
\end{aligned}\]
Thus the resistance matrix of a cycle is given by the circulant matrix:
\begin{equation}
    \label{circulant}
    R(C_n)=\operatorname{circ}\left(0, \frac{n-1}{n}, \frac{2(n-2)}{n}, \ldots, \frac{(n-1)(n-(n-1))}{n}\right).
\end{equation}
For a vertex $v \in V(G)$ the resistance transmission of $v$ will be given by:
\[\begin{aligned}
RTr(v) & =\frac{n-1}{n}+\frac{2(n-2)}{n}+\cdots+\frac{(n-1)[n-(n-1)]}{n} \\
& =\frac{(n-1)(n)}{2}-\frac{1}{n}\left[\frac{(n-1)(n)(2 n-1)}{6}\right]  =\frac{n^2-1}{6}
\end{aligned}\]
Hence,
\begin{equation}
\label{tarns}
    RTr(v)=\frac{n^2-1}{6}, \text{for all \hspace{0.1cm}} v \in V(G)
\end{equation}
From equations \ref{circulant} and \ref{tarns} the resistance Laplacian of cycle will be given as:
\[\begin{aligned}
R^L(C_n) & = Diag(RTr)-R(C_n)\\
& =\frac{n^2-1}{6}I-\operatorname{circ}\left(0, \frac{n-1}{n}, \frac{2(n-2)}{n}, \ldots, \frac{(n-1)(n-(n-1))}{n}\right)\\
& =\operatorname{circ}\left(\frac{n^2-1}{6}, \frac{-(n-1)}{n}, \frac{-2(n-2)}{n}, \ldots, \frac{-(n-1)(n-(n-1))}{n}\right)
\end{aligned}\]

\end{proof}
\begin{theorem}
\label{scycle}
    The resistance signless Laplacian matrix of a cycle on $n$ vertices is given by
\[ R^Q(C_n)=\operatorname{circ}\left(\frac{n^2-1}{6}, \frac{n-1}{n}, \frac{2(n-2)}{n}, \ldots, \frac{(n-1)(n-(n-1))}{n}\right).\]
\end{theorem}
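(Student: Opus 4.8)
The plan is to reuse verbatim the two computations already carried out inside the proof of Theorem \ref{lcycle}: the circulant form \ref{circulant} of the resistance matrix $R(C_n)$ and the identity \ref{tarns} stating that $C_n$ is $\tfrac{n^2-1}{6}$-resistance transmission regular. Since every vertex of $C_n$ has the same resistance transmission, the diagonal matrix $\operatorname{Diag}(RTr)$ is simply $\tfrac{n^2-1}{6}I$, which we may write as the circulant matrix $\operatorname{circ}\!\left(\tfrac{n^2-1}{6},0,\ldots,0\right)$.

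From here I would just add the two pieces:
\[
R^Q(C_n)=\operatorname{Diag}(RTr)+R(C_n)=\frac{n^2-1}{6}I+\operatorname{circ}\!\left(0,\frac{n-1}{n},\frac{2(n-2)}{n},\ldots,\frac{(n-1)(n-(n-1))}{n}\right).
\]
Because the set of $n\times n$ circulant matrices is closed under addition and the sum of two circulants is the circulant of the entrywise sum of their first rows, the right-hand side equals $\operatorname{circ}\!\left(\tfrac{n^2-1}{6},\tfrac{n-1}{n},\tfrac{2(n-2)}{n},\ldots,\tfrac{(n-1)(n-(n-1))}{n}\right)$, which is exactly the claimed expression. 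Notice this differs from $R^L(C_n)$ in Theorem \ref{lcycle} only in the sign of the off-diagonal entries, exactly as one expects since $R^Q=\operatorname{Diag}(RTr)+R$ versus $R^L=\operatorname{Diag}(RTr)-R$.

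I do not anticipate a genuine obstacle here: once Theorem \ref{lcycle} is in hand, the only substantive observation is that resistance-transmission regularity collapses $\operatorname{Diag}(RTr)$ to a scalar matrix, so the result is immediate. If anything requires a moment's care it is the bookkeeping of the entries $\tfrac{k(n-k)}{n}$, $k=1,\ldots,n-1$, in the first row, but these were already pinned down when \ref{circulant} was derived (and the argument that $a_{i,\,k+1+(i-1)}=a_{1,k+1}$ makes the matrix circulant is reused unchanged). As an optional addendum one could invoke Theorem \ref{circulant matrix} to list the eigenvalues of $R^Q(C_n)$ as $\tfrac{n^2-1}{6}+f(\omega^{j})$, where $f$ is the representing polynomial of $R(C_n)$ and $\omega$ is a primitive $n$th root of unity, consistent with Lemma \ref{resi}(ii), though this is not needed for the statement itself.
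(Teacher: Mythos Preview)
Your proposal is correct and matches the paper's own proof essentially line for line: both invoke equations \ref{circulant} and \ref{tarns} from the proof of Theorem \ref{lcycle}, write $\operatorname{Diag}(RTr)=\tfrac{n^2-1}{6}I$, and add this to $R(C_n)$ to obtain the stated circulant. The extra remarks you make about closure of circulants under addition and the optional eigenvalue computation are fine embellishments but not needed.
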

\begin{proof}
    From equations \ref{circulant} and \ref{tarns} the resistance Laplacian of cycle will be given as:
\[\begin{aligned}
R^Q(C_n) & = Diag(RTr)+R(C_n)\\
& =\frac{n^2-1}{6}I+\operatorname{circ}\left(0, \frac{n-1}{n}, \frac{2(n-2)}{n}, \ldots, \frac{(n-1)(n-(n-1))}{n}\right)\\
& =\operatorname{circ}\left(\frac{n^2-1}{6}, \frac{n-1}{n}, \frac{2(n-2)}{n}, \ldots, \frac{(n-1)(n-(n-1))}{n}\right).
\end{aligned}\]

\end{proof}
\begin{corollary}
     Let $g(\lambda)=\dfrac{n-1}{n} \lambda+\dfrac{2(n-2)}{n} \lambda^2 + \cdots+\dfrac{(n-1)(n-(n-1))}{n} \lambda^{n-1}$. Then the eigenvalues of $R^L(C_n)$ are given by $h -g\left(\omega^k\right)$ and that of $R^Q(C_n)$ are given by $h+g\left(\omega^k\right) $ for  $k=0,1, \ldots, n-1$ where $\omega$ is an $n^{th}$ primitive root of unity and $h=\dfrac{n^2-1}{6}$.
\end{corollary}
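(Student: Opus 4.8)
The plan is to read off the result directly from the circulant descriptions of $R^L(C_n)$ and $R^Q(C_n)$ established in Theorems \ref{lcycle} and \ref{scycle}, and then invoke the circulant eigenvalue formula of Theorem \ref{circulant matrix}. First I would record that Theorem \ref{lcycle} gives
\[
R^L(C_n)=\operatorname{circ}\!\left(h,\,-\tfrac{n-1}{n},\,-\tfrac{2(n-2)}{n},\,\ldots,\,-\tfrac{(n-1)(n-(n-1))}{n}\right),
\]
so in the notation of Theorem \ref{circulant matrix} the first-row entries are $c_0=h$ and $c_j=-\tfrac{j(n-j)}{n}$ for $1\le j\le n-1$. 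Hence the associated representer polynomial is
\[
f(\lambda)=c_0+c_1\lambda+\cdots+c_{n-1}\lambda^{n-1}=h-\Bigl(\tfrac{n-1}{n}\lambda+\tfrac{2(n-2)}{n}\lambda^2+\cdots+\tfrac{(n-1)(n-(n-1))}{n}\lambda^{n-1}\Bigr)=h-g(\lambda).
\]
By Theorem \ref{circulant matrix} the eigenvalues of $R^L(C_n)$ are exactly $f(\omega^k)=h-g(\omega^k)$ for $k=0,1,\ldots,n-1$, which is the first claim.

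Next I would carry out the identical argument for $R^Q(C_n)$. Theorem \ref{scycle} gives $R^Q(C_n)=\operatorname{circ}\!\left(h,\,\tfrac{n-1}{n},\,\tfrac{2(n-2)}{n},\,\ldots,\,\tfrac{(n-1)(n-(n-1))}{n}\right)$, so here $c_0=h$ and $c_j=+\tfrac{j(n-j)}{n}$, giving representer polynomial $f(\lambda)=h+g(\lambda)$. Applying Theorem \ref{circulant matrix} once more yields eigenvalues $h+g(\omega^k)$ for $k=0,1,\ldots,n-1$, completing the proof. One may also remark, as a consistency check, that at $k=0$ one gets $\omega^0=1$ and $g(1)=\sum_{j=1}^{n-1}\tfrac{j(n-j)}{n}=\tfrac{n^2-1}{6}=h$, so $R^L(C_n)$ has the eigenvalue $h-g(1)=0$, matching the general fact noted earlier that $R^L$ is singular with all-ones eigenvector.

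There is essentially no serious obstacle here; the only thing to be careful about is matching conventions, namely verifying that the circulant form output by Theorems \ref{lcycle} and \ref{scycle} (first row listed left to right) is precisely the circulant convention used in Theorem \ref{circulant matrix}, so that the representer polynomial is built from the \emph{first row} in the stated order. Once that bookkeeping is confirmed, the corollary is an immediate substitution. If desired, I would also note explicitly that since $\overline{\omega^k}=\omega^{n-k}$ and the coefficients $c_j=\pm\tfrac{j(n-j)}{n}$ satisfy $c_j=c_{n-j}$, the values $g(\omega^k)$ are real, so the displayed numbers are genuinely the (real) spectrum of these symmetric matrices.
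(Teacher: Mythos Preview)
Your proof is correct. It differs from the paper's only in a minor organizational choice: the paper's one-line proof cites Lemma~\ref{resi} in addition to Theorems~\ref{lcycle}, \ref{scycle} and~\ref{circulant matrix}, meaning the intended route is to apply the circulant eigenvalue formula to $R(C_n)$ itself (obtaining the eigenvalues $g(\omega^k)$), observe that $C_n$ is $h$-resistance transmission regular, and then shift by $\pm h$ via Lemma~\ref{resi}. You instead apply Theorem~\ref{circulant matrix} directly to the circulant matrices $R^L(C_n)$ and $R^Q(C_n)$ produced by Theorems~\ref{lcycle} and~\ref{scycle}, bypassing Lemma~\ref{resi} entirely. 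Your route is marginally more self-contained; the paper's route makes explicit the structural reason (resistance-transmission regularity) behind the clean $h\pm g(\omega^k)$ form. Either way the argument is a one-step substitution, and your added remarks on the $k=0$ sanity check and the reality of $g(\omega^k)$ are nice touches not present in the paper.
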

\begin{proof}
The proof follows from Theorem \ref{lcycle}, Theorem \ref{scycle}, Theorem \ref{circulant matrix} and Lemma \ref{resi}.
   
\end{proof}

\subsection{Some Auxiliary Results}

\begin{definition}
    A matrix $A=\left[a_{i j}\right] \in M_n$ is called diagonally dominant if
    \[\left|a_{i i}\right| \geq \sum_{j \neq i}\left|a_{i j}\right| \quad \text { for all } \quad i=1, \ldots, n\]
\end{definition} 

\begin{proposition}{\cite[Theorem 6.1.10]{roger2013}}
\label{diad dom}
    Suppose that $A=\left[a_{i j}\right] \in M_n$ is symmetric and diagonally dominant. If $a_{ii} \ge 0$ for all $i=1,2,\cdots,n$ then $A$ is positive semidefinite matrix.
\end{proposition}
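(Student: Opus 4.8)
The plan is to give a direct quadratic-form argument, since the statement is the classical fact that a symmetric diagonally dominant matrix with nonnegative diagonal entries is positive semidefinite. I would fix an arbitrary vector $x=(x_1,\ldots,x_n)^T\in\mathbb{R}^n$ and, using symmetry of $A$, expand
\[ x^T A x = \sum_{i=1}^n a_{ii}\,x_i^2 + 2\sum_{1\le i<j\le n} a_{ij}\,x_i x_j, \]
the goal being to show this quantity is $\ge 0$.

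The first step is to bound the off-diagonal contribution from below by the arithmetic–geometric mean inequality $|x_i x_j|\le \tfrac12(x_i^2+x_j^2)$, which gives
\[ 2\sum_{i<j} a_{ij} x_i x_j \;\ge\; -2\sum_{i<j} |a_{ij}|\,|x_i|\,|x_j| \;\ge\; -\sum_{i<j} |a_{ij}|\,(x_i^2+x_j^2). \]
The second step is to re-index this double sum: since $A$ is symmetric, $|a_{ij}|=|a_{ji}|$, so for each fixed $k$ the coefficient of $x_k^2$ is $\sum_{j\ne k}|a_{kj}|$, i.e. $\sum_{i<j}|a_{ij}|(x_i^2+x_j^2)=\sum_{i=1}^n x_i^2\sum_{j\ne i}|a_{ij}|$. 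Substituting back yields
\[ x^T A x \;\ge\; \sum_{i=1}^n\Bigl(a_{ii}-\sum_{j\ne i}|a_{ij}|\Bigr)x_i^2. \]
The third step invokes the hypotheses: because $a_{ii}\ge 0$ we have $|a_{ii}|=a_{ii}$, so diagonal dominance reads $a_{ii}\ge\sum_{j\ne i}|a_{ij}|$, making every coefficient nonnegative; hence the right-hand side is a sum of nonnegative terms and $x^T A x\ge 0$. As $x$ was arbitrary and $A$ is symmetric, $A$ is positive semidefinite.

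Alternative routes I would keep in reserve: a Gershgorin-disc argument (a symmetric $A$ has real spectrum, every eigenvalue lies in some $[a_{ii}-R_i,\,a_{ii}+R_i]$ with $R_i=\sum_{j\ne i}|a_{ij}|$, and $a_{ii}\ge R_i\ge 0$ forces nonnegative eigenvalues); or a perturbation argument (for $\varepsilon>0$ the matrix $A+\varepsilon I$ is strictly diagonally dominant with positive diagonal, hence positive definite, and letting $\varepsilon\to 0^+$ gives positive semidefiniteness by continuity of eigenvalues). I expect no serious obstacle here — the result is standard and the direct proof above is self-contained; the only points demanding care are the symmetry-based re-indexing of the cross-term double sum and remembering to use $a_{ii}\ge 0$ so that the diagonal-dominance inequality can be applied without the absolute value on the diagonal entry.
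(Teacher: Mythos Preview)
Your proposal is correct and takes essentially the same approach as the paper: a direct quadratic-form computation exploiting symmetry, diagonal dominance, and the inequality $2|x_i||x_j|\le x_i^2+x_j^2$. The only cosmetic difference is the order of the steps --- the paper applies diagonal dominance first and arrives at the lower bound $\sum_{i<j}|a_{ij}|(|x_i|-|x_j|)^2$, whereas you bound the cross terms first and arrive at $\sum_i\bigl(a_{ii}-\sum_{j\ne i}|a_{ij}|\bigr)x_i^2$; both are manifestly nonnegative.
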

\begin{proof}
    Now for a vector $x$, $x^T A x=\displaystyle\sum_{i=1}^n a_{i i} x_i^2+\sum_{i \neq j} a_{i j} x_i x_j$. Now since $a_{ii} \ge 0$ we get
    \[\begin{aligned}
x^T A x & =\sum_{i=1}^n\left|a_{i i}\right| x_i^2+\sum_{i \neq j} a_{i j} x_i x_j \\
& \geqslant \sum_{i=1}^n \sum_{j \neq i}\left|a_{i j}\right| x_i^2+\sum_{i \neq j} a_{ij} x_i x_j \geqslant \sum_{i=1}^n \sum_{j \neq i}\left|a_{i j}\right| x_i^2-\sum_{i \neq j}\left|a_{i j}\right|\left|x_i\right|\left|x_j\right|\\
\end{aligned}\]
Since $A$ is symmetric we get
\[\begin{aligned}
x^T A x 
& \ge \sum_{j>1}\left|a_{i j}\right|\left(x_i^2+x_j^2\right)-\sum_{j>i}\left|a_{i j}\right|\left|x_i\right|\left|x_j\right|
=\sum_{j>i}\left|a_{i j}\right|\left(x_i^2+x_j^2-2\left|x_i\right|\left|x_j\right|\right) 
\end{aligned}\]
Therefore we get $x^T A x \ge \displaystyle\sum_{j=1}\left|a_{i j}\right|\left(x_i+x_j\right)^2 \ge 0$. Hence $A$ is positive semidefinite.

\end{proof}

\begin{theorem}
\label{posdef}
Let $G$ be a connected graph. Then $R^L(G)$ is positive semidefinite matrix.

\end{theorem}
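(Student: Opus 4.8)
Proof proposal.

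The plan is to invoke Proposition \ref{diad dom} directly: it suffices to check that $R^L(G)$ is symmetric, has nonnegative diagonal entries, and is diagonally dominant. Symmetry was already observed right after the definition of $R^L(G)$, since $R(G)$ is symmetric and $\operatorname{Diag}(RTr)$ is diagonal. For the diagonal entries, the $(i,i)$ entry of $R^L(G)$ equals $RTr(i)=\sum_{u\in V} r(u,i)$, and each resistance distance $r(u,i)$ is an effective resistance, hence nonnegative; thus $\bigl(R^L(G)\bigr)_{ii}=RTr(i)\ge 0$ for every $i$.

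The only remaining point is diagonal dominance, and here equality actually holds in each row. The off-diagonal entries of $R^L(G)$ in row $i$ are exactly $-r(i,j)$ for $j\ne i$, so
\[
\sum_{j\ne i}\bigl|\bigl(R^L(G)\bigr)_{ij}\bigr|=\sum_{j\ne i} r(i,j)=RTr(i)=\bigl|\bigl(R^L(G)\bigr)_{ii}\bigr|,
\]
using again that $r(i,j)\ge 0$. Hence $R^L(G)$ is symmetric, diagonally dominant, with nonnegative diagonal, and Proposition \ref{diad dom} yields that $R^L(G)$ is positive semidefinite.

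There is essentially no obstacle in this argument; the substantive content is packaged in Proposition \ref{diad dom}. The one thing worth stating carefully is the nonnegativity of resistance distances (equivalently, that $R(G)$ is an entrywise nonnegative matrix), which is immediate from the electrical-network interpretation but should be mentioned explicitly so that both the diagonal-dominance estimate and the sign condition $a_{ii}\ge 0$ are justified. It is also worth remarking, as a sanity check consistent with the row-sum observation made earlier, that since each row sum of $R^L(G)$ is $0$ the matrix is singular, so it is positive semidefinite but not positive definite; this matches Corollary \ref{lrevalues} for $K_n$.
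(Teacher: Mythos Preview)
Your argument is correct and follows the same route as the paper: verify symmetry, nonnegativity of the diagonal entries $RTr(i)$, and diagonal dominance of $R^L(G)$ (where you in fact note the rows satisfy equality), then apply Proposition~\ref{diad dom}. Your additional remark that $R^L(G)$ is singular, hence only semidefinite, is a nice sanity check beyond what the paper states.
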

\begin{proof}
We have seen that $R^L(G)$ is a symmetric matrix and we have $r(i,j) \ge 0 $ for all $i,j \in V(G)$. Also for a vertex $i \in V(G)$ we have ${RTr}(i)=\displaystyle\sum_{j \in V} r(i, j)$. This implies that $RTr(i) \ge 0 $ for all $i \in V(G)$. Also the $i^{th}$ row in resistance matrix contains $RTr(i)$ as the diagonal entry and the resistance distances from $i$ to all the other vertices as the off diagonal entry making the $R(G)$ a diagonally dominant matrix. Hence from Proposition \ref{diad dom} we have that $R^L(G)$ is positive semidefinite matrix.

\end{proof}

\begin{definition}
    The spectral radius of a square matrix is defined as the maximum of the absolute values of its eigenvalues.
\newline
    Since $R^L(G)$ is positive semidefinite, the spectral radius of $R^L(G)$ will be its largest eigenvalue.
\end{definition}

\begin{lemma}{\cite[Corollary 4.3.15]{roger2013}} \label{inequality}
    Let $\lambda_1(M) \ge \lambda_2(M) \ge \cdots \ge \lambda_n(M)$ be the eigenvalues of arbitrary symmetric matrix $M$. Suppose $C$ is a matrix of order $n$ such that $C=A+B$. Then, \[\lambda_i(A)+\lambda_1(B) \ge \lambda_i(C) \ge \lambda_i(A)+\lambda_n(B) \]
\end{lemma}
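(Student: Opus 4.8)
This is a special case of Weyl's inequalities for the eigenvalues of a sum of symmetric matrices, and the natural route is through the Courant--Fischer variational characterization of eigenvalues. Recall that for a symmetric $M \in M_n$ with eigenvalues $\lambda_1(M) \ge \cdots \ge \lambda_n(M)$ one has
\[
\lambda_i(M) = \max_{\substack{S \subseteq \mathbb{R}^n \\ \dim S = i}}\ \min_{\substack{x \in S \\ x \ne 0}} \frac{x^T M x}{x^T x},
\]
and in particular $\lambda_n(M)\,x^T x \le x^T M x \le \lambda_1(M)\,x^T x$ for every $x$.

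First I would establish the lower bound $\lambda_i(C) \ge \lambda_i(A) + \lambda_n(B)$. Let $S$ be an $i$-dimensional subspace attaining the maximum in the Courant--Fischer formula for $\lambda_i(A)$, so that $x^T A x \ge \lambda_i(A)\,x^T x$ for all $x \in S$. Since $C = A+B$ and $x^T B x \ge \lambda_n(B)\,x^T x$ for every $x$, it follows that $x^T C x \ge (\lambda_i(A)+\lambda_n(B))\,x^T x$ for all $x \in S$. Taking the maximum over all $i$-dimensional subspaces in the variational formula for $\lambda_i(C)$ yields $\lambda_i(C) \ge \lambda_i(A) + \lambda_n(B)$, which is the right-hand inequality.

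For the upper bound $\lambda_i(C) \le \lambda_i(A) + \lambda_1(B)$, rather than rerun the argument with the dual min-max characterization I would simply reuse the lower bound already proved, applied to the decomposition $A = C + (-B)$. The eigenvalues of $-B$ are $-\lambda_n(B) \ge \cdots \ge -\lambda_1(B)$, so its smallest eigenvalue is $-\lambda_1(B)$; the lower bound then gives $\lambda_i(A) \ge \lambda_i(C) + (-\lambda_1(B))$, which rearranges exactly to $\lambda_i(C) \le \lambda_i(A) + \lambda_1(B)$.

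There is essentially no obstacle here beyond having the Courant--Fischer theorem in hand; if one wanted a fully self-contained proof, the only genuinely substantive step would be establishing that variational characterization, which follows from the spectral theorem by a dimension-counting argument (any $i$-dimensional subspace meets the span of the eigenvectors for $\lambda_i(M), \ldots, \lambda_n(M)$ nontrivially, giving the $\le$ direction, while the span of the top $i$ eigenvectors attains the bound, giving $\ge$). Since the result is quoted from \cite{roger2013} as a known fact, I would invoke Courant--Fischer and present only the two short steps above.

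\begin{proof}
This is \cite[Corollary 4.3.15]{roger2013}; see the discussion above for the argument via the Courant--Fischer min-max theorem.
\end{proof}
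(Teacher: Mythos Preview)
Your argument is correct; this is exactly the standard derivation of Weyl's inequalities from the Courant--Fischer characterization, and the trick of deducing the upper bound from the lower bound via $A = C + (-B)$ is the clean way to finish. However, the paper does not prove this lemma at all: it is stated with a citation to \cite[Corollary~4.3.15]{roger2013} and used as a black box, so there is no ``paper's own proof'' to compare against. Your proposal, which sketches the argument and then cites the reference, actually does more than the paper and is entirely appropriate.
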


\begin{lemma}\cite{lukovits1999}\label{addedge}
    Let $G$ be a graph and $G'=G+e$ where $e \notin E(G)$. Let $r(i,j)$ and $r'(i,j)$ denote the resistance distances between the vertices $i$ and $j$  in graph $G$ and $G'$ respectively. Then $r(i,j) \ge r'(i,j)$ for $i,j=1,2,\cdots,n.$
\end{lemma}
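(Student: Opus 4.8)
The plan is to combine the Moore--Penrose characterization of resistance distance with the rank-one structure of the change in the Laplacian when an edge is added. Write $e=\{k,\ell\}$ for the added edge and set $a=e_i-e_j$, $b=e_k-e_\ell$. From the definition of resistance distance and symmetry of the pseudoinverse, $r(i,j)=L^\dagger_{ii}+L^\dagger_{jj}-2L^\dagger_{ij}=a^{T}L^\dagger a$, and likewise $r'(i,j)=a^{T}(L')^\dagger a$, where $L$ and $L'$ are the Laplacians of $G$ and $G'=G+e$. Using the incidence decomposition $L=\sum_{\{u,v\}\in E}(e_u-e_v)(e_u-e_v)^{T}$, adjoining the edge $e$ amounts to the rank-one update $L'=L+bb^{T}$.

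Next I would reduce everything to the subspace $W=\mathbf{1}^{\perp}$. Since $G$ is connected, so is $G'$, hence both $L$ and $L'$ have rank $n-1$ with kernel spanned by $\mathbf{1}$, and their Moore--Penrose inverses vanish on $\mathbf{1}$ while acting as genuine inverses of the restrictions $M:=L|_{W}$ and $M':=L'|_{W}$, both of which are positive definite. Each of $a$ and $b$ is a difference of two standard basis vectors, hence lies in $W$, so on $W$ we have $M'=M+\tilde b\tilde b^{T}$ (with $\tilde b$ the image of $b$ in $W$), together with $r(i,j)=a^{T}M^{-1}a$ and $r'(i,j)=a^{T}(M')^{-1}a$.

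Finally I would apply the Sherman--Morrison formula on $W$, namely
\[
(M')^{-1}=(M+\tilde b\tilde b^{T})^{-1}=M^{-1}-\frac{M^{-1}\tilde b\,\tilde b^{T}M^{-1}}{1+\tilde b^{T}M^{-1}\tilde b},
\]
which yields
\[
r(i,j)-r'(i,j)=a^{T}M^{-1}a-a^{T}(M')^{-1}a=\frac{\bigl(a^{T}M^{-1}\tilde b\bigr)^{2}}{1+\tilde b^{T}M^{-1}\tilde b}.
\]
Because $M^{-1}$ is positive definite the denominator is positive and the numerator is a square, so the right-hand side is nonnegative, giving $r(i,j)\ge r'(i,j)$. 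Equivalently, one may invoke antitonicity of matrix inversion on positive definite matrices: $M'\succeq M\succ 0$ forces $M^{-1}\succeq (M')^{-1}$, and evaluating this quadratic-form inequality at $a$ gives the claim.

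I expect the only delicate point to be the bookkeeping with the pseudoinverse: justifying cleanly that restricting to $W=\mathbf{1}^{\perp}$ converts $L^\dagger,(L')^\dagger$ into the honest inverses $M^{-1},(M')^{-1}$ and that the rank-one update descends to $W$. Once that is in place the inequality is immediate. An alternative route that bypasses this altogether is Rayleigh's monotonicity law: $r(i,j)$ equals the minimum of $\sum_{e}f_e^{2}$ over unit $i$--$j$ flows $f$, and every unit flow in $G$ extends by the value $0$ on the new edge to a unit flow in $G'$, so enlarging the edge set can only decrease this minimum; however this would require first developing the flow/energy formulation of resistance, which the paper does not currently set up.
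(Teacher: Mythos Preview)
Your argument is correct: the rank-one update $L'=L+bb^{T}$, restriction to $W=\mathbf{1}^{\perp}$ where both Laplacians become positive definite, and the Sherman--Morrison identity together give
\[
r(i,j)-r'(i,j)=\frac{\bigl(a^{T}M^{-1}\tilde b\bigr)^{2}}{1+\tilde b^{T}M^{-1}\tilde b}\ge 0,
\]
exactly as you claim. The bookkeeping you flag is genuinely routine: since $L\mathbf{1}=L'\mathbf{1}=0$ and both matrices are symmetric with rank $n-1$, the Moore--Penrose inverse coincides with the ordinary inverse on $W$ and vanishes on $\mathbf{1}$; and $a,b\in W$ because each has coordinate sum zero.

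There is, however, nothing to compare against in the paper. The lemma is quoted from \cite{lukovits1999} and stated without proof; it is used as an input to Theorem~\ref{spectr} rather than established here. Your write-up therefore supplies a self-contained justification that the paper simply imports from the literature. The Rayleigh monotonicity alternative you mention is in fact closer in spirit to the original physics-style arguments for this inequality, but your Sherman--Morrison route fits better with the pseudoinverse framework the paper already sets up in its definition of $r(i,j)$.
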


\begin{theorem}\label{spectr}
    Let $G$ be a connected graph and $G'=G+e$ where $e \notin E(G)$. Let $\gamma_1^L(G), \gamma_2^L(G),\cdots,$ 
    $\gamma_n^L(G)=0$ be the eigenvalues of $R^L(G)$ and $\gamma_1^L(G'), \gamma_2^L(G'),\cdots, \gamma_n^L(G')=0$ be the eigenvalues of $G'$. Then $\gamma_i^L(G) \ge \gamma_i^L(G')$ for $i=1,2,\cdots,n$.
\end{theorem}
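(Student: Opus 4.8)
The plan is to show that the difference $R^L(G)-R^L(G')$ is positive semidefinite and then invoke Weyl's inequality (Lemma \ref{inequality}). Write $R^L(G)=Diag(RTr)-R(G)$ and $R^L(G')=Diag(RTr')-R(G')$, where $RTr$ and $RTr'$ are the resistance transmission vectors in $G$ and $G'$ respectively. Set $M=R(G)-R(G')$. By Lemma \ref{addedge} we have $r(i,j)\ge r'(i,j)$ for all $i,j$, so $M$ is a symmetric matrix with nonnegative entries and zero diagonal. Moreover, for each vertex $i$,
\[
RTr(i)-RTr'(i)=\sum_{j\in V}\bigl(r(i,j)-r'(i,j)\bigr)=\sum_{j\in V}M_{ij},
\]
that is, the $i$-th diagonal entry of $Diag(RTr)-Diag(RTr')$ equals the $i$-th row sum of $M$.

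Consequently,
\[
R^L(G)-R^L(G')=Diag(RTr)-Diag(RTr')-M=Diag\Bigl(\textstyle\sum_{j}M_{1j},\ldots,\sum_{j}M_{nj}\Bigr)-M.
\]
This matrix is symmetric, its diagonal entries are nonnegative (being sums of the nonnegative off-diagonal entries of $M$), and its $i$-th diagonal entry equals $\sum_{j\ne i}|M_{ij}|$, so it is diagonally dominant. Hence, by Proposition \ref{diad dom}, $R^L(G)-R^L(G')$ is positive semidefinite. (Intuitively, it is nothing but the weighted Laplacian of the graph whose edge weights are the drops $r(i,j)-r'(i,j)$.)

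Finally, apply Lemma \ref{inequality} with $C=R^L(G)$, $A=R^L(G')$ and $B=R^L(G)-R^L(G')$, so that $C=A+B$. Since $B$ is positive semidefinite, $\lambda_n(B)\ge 0$, and the lemma gives
\[
\gamma_i^L(G)=\lambda_i(C)\ge \lambda_i(A)+\lambda_n(B)\ge \lambda_i(A)=\gamma_i^L(G')
\]
for every $i=1,2,\ldots,n$, which is the claim. (Note this is consistent with $\gamma_n^L(G)=\gamma_n^L(G')=0$.)

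I do not expect a serious obstacle here: the only point requiring a moment of care is recognizing that $R^L(G)-R^L(G')$ is again a Laplacian-type matrix so that Proposition \ref{diad dom} applies; everything else is bookkeeping, relying on the monotonicity of resistance distance under edge addition (Lemma \ref{addedge}) and the Weyl-type interlacing bound (Lemma \ref{inequality}).
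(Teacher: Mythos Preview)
Your proof is correct and follows essentially the same route as the paper: write $R^L(G)=R^L(G')+M$, use Lemma~\ref{addedge} to see that $M$ is symmetric, diagonally dominant with nonnegative diagonal, hence positive semidefinite by Proposition~\ref{diad dom}, and then conclude via Lemma~\ref{inequality}. Your version is in fact more explicit than the paper's in verifying the diagonal dominance of $M$.
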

\begin{proof}
    Using the result from Lemma \ref{addedge}, it follows that $r(i,j) \geq r'(i,j)$ for $i,j=1,2,\cdots,n$. Consequently, we can deduce that $R^L(G)=R^L(G')+M$, where $M$ is a symmetric, diagonally dominant matrix with non-negative diagonal entries.  From Proposition \ref{diad dom} $M$ is positive semidefinite. Since $\mathbf{1}$ is an eigenvector of both $R^L(G)$ and $R^L(G')$ corresponding to the eigenvalue $0$, we can conclude that $\mathbf{1}$ is also an eigenvector of $M$ corresponding to the eigenvalue $0$. Thus, by applying Lemma \ref{inequality}, we have  $0.$ Therefore from Lemma \ref{inequality} we have $\gamma_i^L(G) \ge \gamma_i^L(G')$ for $i=1,2,\cdots,n$.
    
\end{proof}

\begin{theorem}
    Let $G$ be a connected graph of order $n$. Then the spectral radius of $R^L(G)$ is at least $2.$
\end{theorem}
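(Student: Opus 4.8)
The plan is to reduce the statement to the already-computed spectrum of the complete graph, exploiting the edge-monotonicity of the largest eigenvalue of $R^L$. First I would recall that, since $R^L(G)$ is positive semidefinite by Theorem \ref{posdef}, its spectral radius equals its largest eigenvalue $\gamma_1^L(G)$; hence it suffices to prove $\gamma_1^L(G) \ge 2$.

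Next I would dispose of the complete case directly: by Corollary \ref{lrevalues} the eigenvalues of $R^L(K_n)$ are $2$ with multiplicity $n-1$ and $0$ with multiplicity $1$, so $\gamma_1^L(K_n)=2$ and the bound holds (with equality). For a general connected graph $G$ on $n\ge 2$ vertices that is not complete, I would fix an ordering $e_1,\dots,e_k$ of the non-edges of $G$ and form the chain of connected graphs $G=G_0 \subset G_1 \subset \cdots \subset G_k = K_n$, where $G_i = G_{i-1}+e_i$; each $G_i$ is connected since $G$ is and we only add edges. Applying Theorem \ref{spectr} at each step gives $\gamma_1^L(G_{i-1}) \ge \gamma_1^L(G_i)$, and chaining these inequalities yields $\gamma_1^L(G) \ge \gamma_1^L(K_n) = 2$, as required.

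I do not anticipate a real obstacle: the substantive work — that inserting an edge cannot increase any eigenvalue of $R^L$ — is exactly Theorem \ref{spectr}, which itself rests on Lemma \ref{addedge} and Proposition \ref{diad dom}. The only minor points to check are that the intermediate graphs stay connected and that $n\ge 2$ so that $R^L(K_n)$ genuinely has $2$ in its spectrum. Alternatively, one could compress the argument into a single application of Lemma \ref{inequality}: write $R^L(G)=R^L(K_n)+M$, where $M=\mathrm{Diag}(RTr_G-RTr_{K_n})-(R(G)-R(K_n))$ is symmetric, has nonnegative diagonal, is diagonally dominant (its row sums vanish) and hence positive semidefinite by Proposition \ref{diad dom}, with $M\mathbf{1}=\mathbf{0}$. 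Then Lemma \ref{inequality} gives $\gamma_1^L(G) \ge \lambda_1\!\left(R^L(K_n)\right) + \lambda_n(M) \ge 2 + 0 = 2$.
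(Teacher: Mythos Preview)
Your main argument is correct and is essentially the paper's own proof: build a chain $G=G_0\subset\cdots\subset G_k=K_n$ by successively adding edges, invoke Theorem~\ref{spectr} at each step to get $\gamma_1^L(G)\ge\gamma_1^L(K_n)$, and finish with Corollary~\ref{lrevalues}. Your additional care about positive semidefiniteness, connectedness of the intermediate graphs, and the case $n\ge 2$, as well as the compressed one-step variant via $R^L(G)=R^L(K_n)+M$, are all fine but not needed beyond what the paper does.
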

\begin{proof}
    Consider a connected graph $G$ of order $n$. Let $G_1, G_2, \cdots, G_s$ be the sequence of graphs obtained from $G$ by adding an edge in each step. This procedure will eventually terminate when the complete graph $K_n$ is obtained. We denote the resistance distances between the vertices $i$ and $j$ in graph $G$ and $G'$ as $r(i,j)$ and $r^k(i,j)$, respectively. Then, from Lemma \ref{addedge}, it follows that $r(i,j) \geq r^k(i,j)$ for $i,j=1,2,\cdots,n$. Using Theorem \ref{spectr} we can say that $\gamma_i^L(G) \ge \gamma_i^L(K_n)$ for $i=1,2,\cdots,n$. Consequently, from Corollary \ref{lrevalues} we can say that $\gamma_1^L(G) \ge 2.$
    
\end{proof}

\section{Energy of Resistance Laplacian matrix}
 Let $G$ be a connected graph with eigenvalues of resistance distance matrix as $\{\gamma_1, \gamma_2, \ldots, \gamma_n\}$. The resistance distance energy of a graph was defined by Kinkar Chandra Das et al. in \cite{kinkar2012}. The resistance matrix eigenvalues satisfy the following relations 
 \begin{equation}\label{eqenergy}
  \sum_{i=1}^n \gamma_i=0 \quad \text { and } \quad \sum_{i=1}^n \gamma_i^2=2 f   
 \end{equation}

where
$$
f=\sum_{1 \leq i<j \leq n}\left(r(i,j)\right)^2
$$
Hence the resistance distance energy was defined as $E_R(G)=\displaystyle\sum_{i=1}^n\left|\gamma_i\right|$.
Our main aim is to come up with a graph energy like quantity \cite{xli2012}, which satisfies the relations of the form \ref{eqenergy} for distance Laplacian matrix of a connected graph $G$.

Let $G$ be a connected graph with $n$ vertices. Let $\gamma_1^L,\gamma_2^L,\cdots,\gamma_n^L$ be the eigenvalues of  $R^L(G)$. Define \[\eta_i=\gamma_i^L-\frac{1}{n} \displaystyle\sum_{j=1}^n U_j\] where $U_j=RTr(j)$.
\begin{theorem}\label{sum}
    Let $G$ be a connected graph of order $n$. Then $\displaystyle\sum_{i=1}^n \eta_i=0 $ and
    $\displaystyle\sum_{i=1}^n\eta_i^2=2 F$  where $$f=\sum_{i, j=1}^n\left(r(i,j)\right)^2 \quad \text{and} \quad F=f+\frac{1}{2} \sum_{i=1}^n\left(U_i-\frac{1}{n} \sum_{j=1}^n U_j\right)^2.$$
\end{theorem}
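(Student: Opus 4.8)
The plan is to reduce both identities to traces of $R^L(G)$ and of $(R^L(G))^2$, following the template of \cite[Lemma 2.1]{yang2013}. The starting observation is that $R(G)$ has zero diagonal, so the diagonal entries of $R^L(G)=Diag(RTr)-R(G)$ are exactly the numbers $U_i=RTr(i)$; consequently $\sum_{i=1}^n\gamma_i^L=\operatorname{tr}\big(R^L(G)\big)=\sum_{i=1}^n U_i$. Writing $\overline{U}=\frac1n\sum_{j=1}^n U_j$, the first identity is then immediate: $\sum_{i=1}^n\eta_i=\sum_{i=1}^n\gamma_i^L-n\overline{U}=\sum_{i=1}^n U_i-\sum_{i=1}^n U_i=0$.

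For the second identity I would expand $\eta_i^2=(\gamma_i^L)^2-2\overline{U}\,\gamma_i^L+\overline{U}^2$ and sum over $i$; using $\sum_i\gamma_i^L=n\overline{U}$ the cross term and the constant term combine, giving
\[
\sum_{i=1}^n\eta_i^2=\sum_{i=1}^n(\gamma_i^L)^2-n\overline{U}^2=\operatorname{tr}\big((R^L(G))^2\big)-\frac1n\Big(\sum_{j=1}^n U_j\Big)^{2}.
\]
Since $R^L(G)$ is symmetric, $\operatorname{tr}\big((R^L(G))^2\big)$ equals the sum of the squares of all its entries; splitting this into the diagonal part $\sum_{i=1}^n U_i^2$ and the off-diagonal part $\sum_{i\neq j}r(i,j)^2=2\sum_{1\le i<j\le n}r(i,j)^2=2f$, and then invoking the elementary identity $\sum_{i=1}^n(U_i-\overline{U})^2=\sum_{i=1}^n U_i^2-\frac1n\big(\sum_j U_j\big)^2$, one rewrites the right-hand side as $2f+\sum_{i=1}^n(U_i-\overline{U})^2=2F$, which is the assertion.

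I do not expect a real obstacle here: the argument is a routine Frobenius-norm/trace computation, identical in structure to the distance-Laplacian case. The two places that deserve care are the bookkeeping conventions. First, one must read $f$ as the sum over unordered pairs $\sum_{1\le i<j\le n}r(i,j)^2$ (as in the definition preceding the theorem), since it is this convention that makes the off-diagonal part of $\operatorname{tr}((R^L)^2)$ equal exactly $2f$. Second, the step $\operatorname{tr}\big((R^L)^2\big)=\sum_{i,j}(R^L)_{ij}^2$ is precisely where the symmetry of $R^L(G)$, already noted in Section 2, is used; everything else is the mean-subtraction algebra.
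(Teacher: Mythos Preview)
Your proof is correct and follows essentially the same route as the paper: both arguments compute $\sum_i\eta_i$ and $\sum_i\eta_i^2$ via the trace identities $\operatorname{tr}(R^L)=\sum_i U_i$ and $\operatorname{tr}\big((R^L)^2\big)=\sum_i U_i^2+2f$, and then finish with the mean-subtraction identity $\sum_i(U_i-\overline{U})^2=\sum_i U_i^2-\frac1n\big(\sum_j U_j\big)^2$. Your caveat about reading $f$ as $\sum_{1\le i<j\le n}r(i,j)^2$ is on point; the paper's own proof uses precisely that convention when it writes the off-diagonal contribution as $2f$.
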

\begin{proof}
We have
\begin{equation}
\label{en}
    \sum_{i=1}^n \gamma_i^L=trace\left(R^L\right)=\sum_{i=1}^n U_i
\end{equation}
Therefore 

$\begin{aligned}
\sum_{i=1}^n\left(\gamma_i^L\right)^2 & =trace\left[\left(R^L\right)^2\right]  =\sum_{i=1}^n U_i^2+\sum_{i, j=1}^n\left(\gamma_{i j}\right)^2=\sum_{i=1}^n U_i^2+2 f
\end{aligned}$
where $f=\displaystyle\sum_{i, j=1}^n\left(r(i,j)\right)^2$

Hence we get that $
\displaystyle\sum_{i=1}^n \eta_i  =\displaystyle\sum_{i=1}^n\left(\gamma_i^L-\frac{1}{n} \displaystyle\sum_{j=1}^n U_j\right) =\displaystyle\sum_{i=1}^n \gamma_i^L-\sum_{j=1}^2 U_j$.
From equation \ref{en} it follows that $\displaystyle\sum_{i=1}^n \eta_i=0$.
\newline
Next we have 
\[\begin{aligned}
\sum_{i=1}^n \eta_i^2 & =\sum_{i=1}^n\left(\gamma_i^L-\frac{1}{n} \sum_{j=1}^n U_j\right)^2 =\sum_{i=1}^n\left(\gamma_i^L\right)^2-\frac{2}{n} \sum_{j=1}^n U_j \sum_{i=1}^n \gamma_i^L+\frac{1}{n^2}\left(\sum_{j=1}^n U_j\right)^2 \\
& =\sum_{i=1}^n U_i^2+2 f-\frac{2}{n} \sum_{j=1}^n U_j \sum_{i=1}^n \gamma_i^L+\frac{1}{n^2}\left(\sum_{j=1}^n U_j\right)^2  =2 f+\sum_{i=1}^n\left(U_i-\frac{1}{n} \sum_{j=1}^n U_j\right)^2 =2 F
\end{aligned}\]
where  $F=f+\frac{1}{2} \displaystyle\sum_{i=1}^n\left(U_i-\frac{1}{n} \displaystyle\sum_{j=1}^n U_j\right)^2$. Hence the result.

\end{proof}
\begin{definition}
    Let $G$ be a connected graph on $n$ vertices and let $U_j$ denote the resistance transmission of vertex $j \in V(G)$. Let $\gamma_1^L,\gamma_2^L,\cdots,\gamma_n^L$ be the eigenvalues of  $R^L(G)$. Then we define the resistance Laplacian energy of graph $G$ as: \[LE_R(G)=\sum_{i=1}^n\left|\eta_i\right|\]
    where $\eta_i=\gamma_i^L-\dfrac{1}{n} \displaystyle\sum_{j=1}^n U_j$.
    
\end{definition}

\begin{example}
For a complete graph $K_n$ we know that the eigenvalues of $R^L(K_n)$ are $2$ with multiplicity $n-1$ and $0$ with multiplicity $1$ and $U_j=2-\dfrac{2}{n}$ for all $j \in V(K_n)$. Therefore we have
\[\begin{aligned}
 L E_{R}(K_n) & =\sum_{i=1}^n\left|\gamma_i^L-\frac{1}{n} \sum_{j=1}^n U_j\right| =\sum_{i=1}^n\left|\gamma_i^L-\frac{1}{n} (2 n-2)\right| \\
& =\sum_{i=1}^n\left|\gamma_i^L(K_n)-2+\frac{2}{n}\right|  =\sum_{i=1}^n\left|2- 2+\frac{2}{n}\right|+\left|-2+\frac{2}{n}\right| \\
& =\frac{2}{n}(n-1)+2-\frac{2}{n}  =4\left(1-\frac{1}{n}\right) \\
\end{aligned}\]
\end{example}
\begin{theorem}
    Let $G$ be a $k$- resistance transmission regular graph. Let $\{\gamma_1,\gamma_2,\cdots,\gamma_n\}$ and $\{\gamma_1^L,\gamma_2^L,\cdots,\gamma_n^L\}$ be the sets of eigenvalues of $R(G)$ and $R^L(G)$ respectively. Then $L E_R(G)=E_R(G)$.
\end{theorem}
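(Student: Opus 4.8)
The plan is to reduce the claim directly to Lemma~\ref{resi}(i) together with the definition of $\eta_i$. First I would use the hypothesis that $G$ is $k$-resistance transmission regular: this says $U_j = RTr(j) = k$ for every vertex $j$, so the average resistance transmission is $\frac{1}{n}\sum_{j=1}^n U_j = \frac{1}{n}\cdot nk = k$. Consequently the shift subtracted in the definition of the resistance Laplacian energy is exactly $k$, i.e. $\eta_i = \gamma_i^L - k$ for each $i$.

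Next I would apply Lemma~\ref{resi}(i), which for a $k$-resistance transmission regular graph gives $R^L(G) = kI - R(G)$ and hence identifies the eigenvalues of $R^L(G)$ as $k - \gamma_i$, $i = 1,\dots,n$ (up to reordering, which is irrelevant for a sum of absolute values). Substituting, $\eta_i = (k - \gamma_i) - k = -\gamma_i$.

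Finally, taking absolute values and summing yields $LE_R(G) = \sum_{i=1}^n |\eta_i| = \sum_{i=1}^n |-\gamma_i| = \sum_{i=1}^n |\gamma_i| = E_R(G)$, as required. There is no real obstacle here: the substantive content is already packaged in Lemma~\ref{resi}, and the only points needing a line of justification are the computation $\sum_j U_j = nk$ and the observation that reindexing the eigenvalues does not change $\sum |\gamma_i|$. (This mirrors the classical fact, recorded by Aouchiche and Hansen, that for transmission regular graphs the distance, distance Laplacian, and distance signless Laplacian spectra determine one another.)
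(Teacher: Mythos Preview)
Your proposal is correct and follows essentially the same approach as the paper: both compute the average resistance transmission as $k$, invoke Lemma~\ref{resi}(i) to write the resistance Laplacian eigenvalues as $k-\gamma_j$, and conclude $\eta_i=-\gamma_j$ so that $LE_R(G)=\sum|\gamma_j|=E_R(G)$. The only cosmetic difference is that the paper tracks the explicit index reversal $\gamma_i^L=k-\gamma_{n+1-i}$ rather than appealing to the reordering remark you make.
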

\begin{proof}
    Since $G$ is a $k$- resistance transmission regular graph we have $k=U_i=\dfrac{1}{n} \displaystyle\sum_{j=1}^n U_j$ for $i=1,2, \ldots, n$. From Lemma \ref{resi} we get
    \[\eta_i=\gamma_i^L-\dfrac{1}{n} \sum_{j=1}^n U_j=\left(k-\gamma_{n+1-i}\right)-k=-\gamma_{n+1-i}\]
    Therefore $LE_R(G)=\displaystyle\sum_{i=1}^n\left|\eta_i\right|=\displaystyle\sum_{i=1}^n\left|-\gamma_{n+1-i}\right|=\displaystyle\sum_{j=1}^n\left|\gamma_{j}\right|=E_R(G)$.

\end{proof}

\begin{theorem}
    Let $G$ be a connected graph of order $n$. Then
    \[2\sqrt{F}\leq LE_R(G) \leq \sqrt{2nF}\]
\end{theorem}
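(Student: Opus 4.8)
The plan is to imitate the classical McClelland- and Koolen--Moulton-style arguments for graph energy, using only the two identities supplied by Theorem \ref{sum}, namely $\sum_{i=1}^n \eta_i = 0$ and $\sum_{i=1}^n \eta_i^2 = 2F$. Write $LE_R(G)=\sum_{i=1}^n|\eta_i|$ throughout.

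For the upper bound, I would apply the Cauchy--Schwarz inequality to the vectors $(|\eta_1|,\dots,|\eta_n|)$ and $(1,\dots,1)$, which gives $\bigl(\sum_{i=1}^n|\eta_i|\bigr)^2 \le n\sum_{i=1}^n|\eta_i|^2 = n\cdot 2F = 2nF$, and taking square roots yields $LE_R(G)\le\sqrt{2nF}$.

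For the lower bound, I would expand the square:
\[
\bigl(LE_R(G)\bigr)^2 = \Bigl(\sum_{i=1}^n|\eta_i|\Bigr)^2 = \sum_{i=1}^n \eta_i^2 + \sum_{i\ne j}|\eta_i|\,|\eta_j| = 2F + \sum_{i\ne j}|\eta_i|\,|\eta_j|.
\]
Then I would bound the cross term from below using $\sum_{i\ne j}|\eta_i||\eta_j| \ge \bigl|\sum_{i\ne j}\eta_i\eta_j\bigr| = \bigl|(\sum_{i=1}^n\eta_i)^2 - \sum_{i=1}^n\eta_i^2\bigr| = |0 - 2F| = 2F$, where the first step is the triangle inequality and the middle equality uses $\sum_i\eta_i = 0$ from Theorem \ref{sum}. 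Combining, $\bigl(LE_R(G)\bigr)^2 \ge 2F + 2F = 4F$, so $LE_R(G)\ge 2\sqrt{F}$.

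There is essentially no serious obstacle here: both halves are routine once Theorem \ref{sum} is in hand, and the only minor point to be careful about is noting $F\ge 0$ (clear, since $f\ge 0$ as a sum of squares of resistance distances and the correction term is also a sum of squares) so that the square roots are well defined and the inequalities are meaningful. If desired, one could additionally record the equality conditions (the upper bound is tight iff all $|\eta_i|$ are equal; the lower bound is tight iff at most one $\eta_i$ is nonzero on each sign, i.e.\ essentially one positive and one negative value), but this is not required by the statement.
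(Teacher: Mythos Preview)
Your proof is correct and is essentially the same as the paper's. The only cosmetic difference is in the upper bound: you invoke Cauchy--Schwarz directly on $(|\eta_1|,\dots,|\eta_n|)$ and $(1,\dots,1)$, whereas the paper expands the nonnegative quantity $T=\sum_{i,j}(|\eta_i|-|\eta_j|)^2=4nF-2LE_R(G)^2$, which is of course the Lagrange-identity form of the same inequality; the lower-bound arguments are identical in structure.
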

\begin{proof}
    Consider the expression $T=\displaystyle\sum_{i=1}^n\displaystyle\sum_{j=1}^n \left(\left|\eta_i\right|-\left|\eta_j\right|\right)^2$. Expanding the expression we have
    \[T=2 n \sum_{i=1}^n\left|\eta_i\right|^2-2\left(\sum_{i=1}^n\left|\eta_i\right|\right)\left(\sum_{j=1}^n\left|\eta_j\right|\right)=2 n \cdot 2 F-2 L E_R(G)^2=4 n F-2 L E_R(G)^2\]
    Now since $T \geq 0$ we have $4 n F-2 L E_R(G)^2 \ge 0$. Therefore, $LE_R(G) \leq \sqrt{2nF}$ for $F>0$.
    Now we have,
    \[
    \begin{aligned}
2 F & =\sum_{i=1}^n \eta_i^2=\left(\sum_{i=1}^n \eta_i\right)^2-2 \sum_{1 \leq i<j \leq n} \eta_i \eta_j 
\end{aligned}
\]
From Theorem \ref{sum}, $\displaystyle\sum_{i=1}^n \eta_i=0 $. Hence we get
\[
\begin{aligned}
    2F =-2 \sum_{1 \leq i<j \leq n} \eta_i \eta_j=2\left|\sum_{1 \leq i<j \leq n} \eta_i \eta_j\right| \leq 2 \sum_{1 \leq i<j \leq n}\left|\eta_i\right|\left|\eta_j\right| .
\end{aligned}
\]
Thus, 
\[\begin{aligned}
L E_R(G)^2 & =\left(\sum_{i=1}^n\left|\eta_i\right|\right)^2 \\
& =\sum_{i=1}^n\left|\eta_i\right|^2+2 \sum_{1 \leq i<j \leq n}\left|\eta_i\right|\left|\eta_j\right| \geq 2F+2 F=4F 
\end{aligned}\]
  This implies that $2\sqrt{F}\leq LE_R(G).$  
  
\end{proof}
 \begin{theorem}
      Let $G$ be a connected graph of order $n$. Then
      \[L E_R(G) \leq \frac{1}{n} \sum_{i=1}^n U_i+\sqrt{(n-1)\left[2 F-\left(\frac{1}{n} \sum_{i=1}^n U_i\right)^2\right]} \]
  \end{theorem}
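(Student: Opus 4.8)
The plan is to exploit the singularity of $R^L(G)$. Since $\mathbf{1}$ is an eigenvector of $R^L(G)$ for the eigenvalue $0$, and we index the eigenvalues in decreasing order $\gamma_1^L \ge \cdots \ge \gamma_n^L = 0$, the deviation corresponding to this smallest eigenvalue is $\eta_n = \gamma_n^L - \frac{1}{n}\sum_{j=1}^n U_j = -\frac{1}{n}\sum_{j=1}^n U_j$. Because $U_j = RTr(j) = \sum_{u \in V} r(u,j) \ge 0$, we have $\frac{1}{n}\sum_{j=1}^n U_j \ge 0$, so $|\eta_n| = \frac{1}{n}\sum_{j=1}^n U_j$. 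The idea is to peel this one term off $LE_R(G) = \sum_{i=1}^n |\eta_i|$ and bound the remaining $n-1$ terms by Cauchy--Schwarz.

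First I would write $LE_R(G) = |\eta_n| + \sum_{i=1}^{n-1}|\eta_i| = \frac{1}{n}\sum_{j=1}^n U_j + \sum_{i=1}^{n-1}|\eta_i|$. Applying the Cauchy--Schwarz inequality to the $(n-1)$-tuple $(|\eta_1|,\dots,|\eta_{n-1}|)$ against the all-ones vector gives $\sum_{i=1}^{n-1}|\eta_i| \le \sqrt{(n-1)\sum_{i=1}^{n-1}\eta_i^2}$. By Theorem \ref{sum} we have $\sum_{i=1}^n \eta_i^2 = 2F$, hence $\sum_{i=1}^{n-1}\eta_i^2 = 2F - \eta_n^2 = 2F - \bigl(\frac{1}{n}\sum_{j=1}^n U_j\bigr)^2$; note this is automatically non-negative, being a sum of squares, so the square root is well defined. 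Substituting yields $LE_R(G) \le \frac{1}{n}\sum_{i=1}^n U_i + \sqrt{(n-1)\bigl[2F - \bigl(\frac{1}{n}\sum_{i=1}^n U_i\bigr)^2\bigr]}$, which is exactly the claimed inequality.

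There is essentially no serious obstacle; the only point demanding care is that the term isolated is the genuine zero eigenvalue $\gamma_n^L$, so that $\eta_n$ evaluates cleanly to $-\frac{1}{n}\sum_j U_j$, together with the observation $\frac{1}{n}\sum_j U_j \ge 0$ that makes dropping the absolute value harmless. One could additionally note that for $K_n$ the remaining deviations $|\eta_1| = \cdots = |\eta_{n-1}|$ are all equal and the Cauchy--Schwarz step is an equality, so the bound is attained; this shows it is sharp.
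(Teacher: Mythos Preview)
Your proof is correct and follows essentially the same route as the paper: isolate the term $\eta_n=-\frac{1}{n}\sum_j U_j$ coming from the eigenvalue $\gamma_n^L=0$, and bound the remaining $n-1$ terms via Cauchy--Schwarz (the paper phrases this step as the nonnegativity of $T'=\sum_{i=1}^{n-1}\sum_{j=1}^{n-1}(|\eta_i|-|\eta_j|)^2$, which is the same inequality). Your added remark on sharpness at $K_n$ is a nice bonus not present in the paper.
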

  \begin{proof}
      We know that $\gamma_n^L=0$ is an eigenvalue of $R^L(G)$. Therefore $\eta_n=-\dfrac{1}{n}\displaystyle\sum_{i=1}^n U_i$.
      Consider $T'=\displaystyle\sum_{i=1}^{n-1}\displaystyle\sum_{j=1}^{n-1}\left(\left|\eta_i\right|-\left|\eta_j\right|\right)^2$. Expanding the expression we get
  \[
  \begin{aligned}   
  T'&  =2 (n-1) \sum_{i=1}^{n-1}\left|\eta_i\right|^2-2\left(\sum_{i=1}^{n-1}\left|\eta_i\right|\right)\left(\sum_{j=1}^{n-1}\left|\eta_j\right|\right) \\
  & =2(n-1)\left[2 F-\left(\frac{1}{n} \sum_{i=1}^n U_i\right)^2\right]-2\left(L E_R(G)-\frac{1}{n} \sum_{i=1}^n U_i\right)^2 \geq 0 \\
  \end{aligned}
  \]
  Thus, $L E_R(G) \leq \dfrac{1}{n} \displaystyle\sum_{i=1}^n U_i+\sqrt{(n-1)\left[2 F-\left(\dfrac{1}{n} \displaystyle\sum_{i=1}^n U_i\right)^2\right]}$
 
  \end{proof}
   \begin{lemma}{\cite[Corollary 8.1.20]{roger2013}}\label{spectral radius}
      Let $A=[a_{ij} \in M_n]$ be a positive semidefinite matrix with largest eigenvalue $\lambda_1$. Then $\lambda_1 \ge \displaystyle\max_{i} a_{ii}.$ 
  \end{lemma}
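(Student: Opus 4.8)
The plan is to deduce the bound directly from the variational (Rayleigh--Ritz) characterization of the largest eigenvalue of a symmetric matrix. Recall that for a real symmetric matrix $A \in M_n$ with largest eigenvalue $\lambda_1$ one has
\[
\lambda_1 = \max_{x \neq 0} \frac{x^T A x}{x^T x}.
\]
(Positive semidefiniteness is not even strictly needed here, but it is the hypothesis we are given, and in that setting $\lambda_1 \geq 0$ so the maximum is attained by a genuine ``largest'' eigenvalue in absolute value as well.)

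The main step is to specialize the test vector $x$ to the standard basis vectors. For each $i \in \{1,\dots,n\}$, taking $x = e_i$ gives $x^T x = 1$ and $x^T A x = e_i^T A e_i = a_{ii}$. Hence
\[
\lambda_1 \;\geq\; \frac{e_i^T A e_i}{e_i^T e_i} \;=\; a_{ii} \qquad \text{for every } i = 1,\dots,n.
\]
Since this inequality holds for each diagonal entry, taking the maximum over $i$ yields $\lambda_1 \geq \max_i a_{ii}$, which is the assertion.

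There is really no substantial obstacle in this argument; the only thing to be careful about is to invoke the correct form of the Rayleigh quotient bound (one needs $A$ symmetric, which holds here, and the inequality direction is the ``easy'' one, requiring no optimization — merely evaluation at the coordinate vectors). If one prefers to avoid citing the variational principle, an equally short alternative is to write the spectral decomposition $A = \sum_k \lambda_k u_k u_k^T$ with $\lambda_1 \geq \lambda_k$ for all $k$, and then $a_{ii} = e_i^T A e_i = \sum_k \lambda_k (u_k^T e_i)^2 \leq \lambda_1 \sum_k (u_k^T e_i)^2 = \lambda_1 \|e_i\|^2 = \lambda_1$, again for every $i$. Either route finishes the proof in a couple of lines.
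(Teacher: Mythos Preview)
Your proof is correct; the Rayleigh--Ritz specialization to the coordinate vectors (or equivalently the spectral-decomposition version you also sketch) is exactly the standard argument. Note, however, that the paper does not give its own proof of this lemma: it is stated without proof and simply cited from Horn and Johnson, so there is nothing to compare your argument against beyond observing that you have supplied a clean self-contained justification where the paper relies on an external reference.
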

   \begin{theorem}
      Let $G$ be a connected graph of order $n$. Let $\eta_1=\gamma_1^L-\dfrac{1}{n} \displaystyle\sum_{j=1}^n U_j$ where $\gamma_1^L$ is the largest eigenvalue of $R^L(G)$. Then 
      \[L E_R(G) \leq \eta_1+\sqrt{(n-1)\left(2 F-\eta_1^2\right)}\]
  \end{theorem}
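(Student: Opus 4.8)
The plan is to follow the classical Koolen--Moulton / McClelland strategy for energy-type invariants: isolate the largest term of $LE_R(G)$ and bound the remaining $n-1$ terms by Cauchy--Schwarz, using the two identities from Theorem \ref{sum}.

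First I would observe that $\eta_i = \gamma_i^L - \frac{1}{n}\sum_{j=1}^n U_j$ is obtained from the eigenvalue $\gamma_i^L$ by subtracting a fixed constant, so the $\eta_i$ inherit the ordering of the $\gamma_i^L$; in particular $\eta_1 = \max_{1\le i\le n}\eta_i$. Since $\sum_{i=1}^n \eta_i = 0$ by Theorem \ref{sum}, the maximum cannot be negative, so $\eta_1 \ge 0$ and $|\eta_1| = \eta_1$. (Alternatively, one can invoke positive semidefiniteness of $R^L(G)$ from Theorem \ref{posdef} together with Lemma \ref{spectral radius}, which gives $\gamma_1^L \ge \max_i U_i \ge \frac{1}{n}\sum_i U_i$, hence $\eta_1 \ge 0$.) Consequently
\[
LE_R(G) = \sum_{i=1}^n |\eta_i| = \eta_1 + \sum_{i=2}^n |\eta_i|.
\]

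Next I would apply the Cauchy--Schwarz inequality to the $n-1$ numbers $|\eta_2|,\dots,|\eta_n|$ and use the second identity $\sum_{i=1}^n \eta_i^2 = 2F$ of Theorem \ref{sum}:
\[
\left(\sum_{i=2}^n |\eta_i|\right)^{2} \le (n-1)\sum_{i=2}^n \eta_i^2 = (n-1)\left(\sum_{i=1}^n \eta_i^2 - \eta_1^2\right) = (n-1)\left(2F - \eta_1^2\right).
\]
Taking square roots and adding $\eta_1$ to both sides yields $LE_R(G) \le \eta_1 + \sqrt{(n-1)(2F-\eta_1^2)}$, as claimed.

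The argument is essentially routine; the only step requiring care is the sign of $\eta_1$, which is what makes $|\eta_1| = \eta_1$ and hence allows the clean split above. If one also wanted the equality characterization (not asked for in the statement), the Cauchy--Schwarz step would force $|\eta_2| = \cdots = |\eta_n|$, mirroring the Koolen--Moulton equality condition.
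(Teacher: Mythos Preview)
Your proof is correct and follows essentially the same route as the paper: both isolate $|\eta_1|$, apply Cauchy--Schwarz to the remaining $n-1$ terms against the all-ones vector, and then verify $\eta_1\ge 0$ so that $|\eta_1|=\eta_1$. Your first justification of $\eta_1\ge 0$ via $\sum_i \eta_i=0$ is slightly more elementary than the paper's, which instead invokes Lemma~\ref{spectral radius} (your stated alternative), but the overall argument is the same.
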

  \begin{proof} 
  We will renumber the vertices of $G$ so that $U_1=\displaystyle\max_{i \in V(G)} U_i$.
      Consider the two $(n-1)$-dimensional vectors $(1,1,\cdots,1)$ and $(\left|\eta_2\right|,\left|\eta_3\right|,\cdots,\left|\eta_n\right|)$. Then by Cauchy Schwarz inequality we have 
      \[\left(\sum_{i=2}^n\left|\eta_i\right|\right)^2 \leq(n-1) \sum_{i=2}^n\left|\eta_i\right|^2\]
      which implies, $$\left(LE_R(G)-\left|\eta_1\right|\right)^2 \leq(n-1)\left(2 F-\left|\eta_1\right|^2\right).$$
      Therefore we get 
      \begin{equation}\label{energy2}
          L E_R(G) \leq\left|\eta_1\right|+\sqrt{(n-1)\left(2 F-\left|\eta_1\right|^2\right)} 
    \end{equation}
      Now from Lemma \ref{spectral radius} we have $\gamma_1^L \ge D_1$. Thus $\eta_1=\gamma_1^L-\dfrac{1}{n} \displaystyle\sum_{j=1}^n U_j=U_1-\dfrac{1}{n} \displaystyle\sum_{j=1}^n U_j \ge 0.$. Therefore the inequality \ref{energy2} can be written as \[L E_R(G) \leq\eta_1+\sqrt{(n-1)\left(2 F-\eta_1^2\right)}\]
  \end{proof}

\bibliography{refs.bib}
\bibliographystyle{ieeetr}
\end{document}